\newcommand{\bC}{\mathbb{C}}
\newcommand{\sP}{\mathsf{P}}
\newcommand{\bP}{\mathbb{P}}
\newcommand{\cS}{\mathcal{S}}
\newcommand{\can}{\operatorname{can}}
\newcommand{\sd}{\mathsf{d}}
\newcommand{\rd}{\mathrm{d}}
\newcommand{\bR}{\mathbb{R}}
\newcommand{\cZ}{\mathcal{Z}}
\newcommand{\ord}{\operatorname{ord}}
\newcommand{\bN}{\mathbb{N}}
\newcommand{\diam}{\operatorname{diam}}
\newcommand{\PGL}{\mathit{PGL}}
\newcommand{\PSU}{\mathit{PSU}}
\newcommand{\cO}{\mathcal{O}}
\newcommand{\bD}{\mathbb{D}}
\newcommand{\sH}{\mathsf{H}}
\newcommand{\diag}{\operatorname{diag}}
\newcommand{\supp}{\operatorname{supp}}
\newcommand{\Id}{\mathrm{Id}}
\theoremstyle{plain}
\newtheorem{theorem}{Theorem}[section]
\newtheorem{lemma}[theorem]{Lemma}
\newtheorem{mainth}{Theorem}
\theoremstyle{definition}
\newtheorem{notation}[theorem]{Notation}
\newtheorem*{acknowledgement}{Acknowledgement}
\theoremstyle{remark}
\newtheorem{remark}[theorem]{Remark}
\newtheorem{fact}[theorem]{Fact}
\numberwithin{equation}{section}
\begin{document} 

\title[A Mahler-type estimate on the Berkovich projective line]{
A Mahler-type estimate of weighted Fekete sums on the Berkovich projective line}

\author[Y\^usuke Okuyama]{Y\^usuke Okuyama}
\address{
Division of Mathematics,
Kyoto Institute of Technology,
Sakyo-ku, Kyoto 606-8585 Japan.}
\email{okuyama@kit.ac.jp}

\date{\today}

\subjclass[2010]{Primary 37P30; Secondary
37P50}


\keywords{Mahler-type estimate, weighted Fekete sum, Berkovich projective line,
potential theory, non-archimedean dynamics, complex dynamics}

\begin{abstract}
We establish a Mahler-type estimate
of weighted Fekete sums on the Berkovich projective line over
an algebraically closed field of possibly positive characteristic
that is complete with respect to
a non-trivial and possibly non-archimedean absolute value.
\end{abstract}

\maketitle 

\section{Introduction}\label{sec:intro}

Let $K$ be an algebraically closed field of possibly positive characteristic
that is complete with respect to
a non-trivial and possibly non-archimedean absolute value $|\cdot|$. 
Recall that $K$ is said to be {\itshape non-archimedean} 
if the {\itshape strong triangle inequality}
\begin{gather*}
 |z+w|\le\max\{|z|,|w|\} 
\end{gather*}
holds for every $z,w\in K$, and otherwise,
to be {\itshape archimedean}.
It is known that $K\cong\bC$ if and only if $K$ is archimedean.
The {\itshape Berkovich} projective line $\sP^1=\sP^1(K)$ is
a compact augmentation of the (classical) projective line 
$\bP^1=\bP^1(K)$.
It is known that $\sP^1\cong\bP^1$ if and only if $K$ is archimedean. 

For archimedean $K\cong\bC$,
the ($\log$ of the) classical {\itshape Mahler's estimate} of the Fekete 
{\itshape product} $\prod_{i=1}^N\prod_{j:\,j\neq i}|z_i-z_j|$ 
for any $N$ distinct points $z_1,\ldots,z_N$ in $K$ is
\begin{gather}
 \sum_{i=1}^N\sum_{j:\,j\neq i}(\log|z_i-z_j|-\log\max\{1,|z_i|\}-\log\max\{1,|z_j|\})
 \le N\log N\label{eq:mahlerorig}
\end{gather}
(\cite[Theorem 1]{Mahler64}). 
Aiming to study the field of definition for periodic points of 
a rational function over a number field, 
Benedetto \cite[Lemma 4.1]{Benedetto07} (in the case of $f\in K[z]$)
and Baker \cite[Theorem 1.1]{Baker06} (in the case of $f\in K(z)$)
generalized \eqref{eq:mahlerorig} as follows;
for every $f\in K(z)$ of degree $>1$,
\begin{gather}
 \sum_{i=1}^N\sum_{j:\,j\neq i}(\log[z_i,z_j]-g_f(z_i)-g_f(z_j))\le C\cdot N\log N\label{eq:BB}
\end{gather}
for any $N$ distinct points $z_1,\ldots,z_N$ in $\bP^1$,
where $C\ge 0$ is an effective constant independent of $z_1,\ldots,z_N$.
Here, $[z,w]$ is the normalized chordal distance on $\bP^1$ 
(see Notation \ref{th:notation} for the definition) and
$g_f$ is the dynamical Green function of $f$ on $\sP^1$ 
(see Fact \ref{th:dynamics} for the definition of $g_f$). 

The proofs of \eqref{eq:mahlerorig} and \eqref{eq:BB} were based on 
Hadamard's inequality, taking into account the geometry of the (homogeneous)
filled Julia set of (the non-degenerate homogeneous lift of) $f$.
One of our aims in this article is to give a simple proof of \eqref{eq:BB}
with, in general, the asymptotically {\itshape best possible} lower estimate of 
the constant $C>0$. Our proof is
based on a simple formula on {\itshape weighted Fekete sums} and
some upper and lower estimates of {\itshape regularized Fekete sums}
(see \S \ref{sec:Fekete} and \S \ref{th:reguFekete}, respectively)
from a potential theory on $\sP^1$.
For the foundation of the potential theory on $\sP^1$
for non-archimedean $K$, see Baker--Rumely \cite{BR10}, 
Favre--Rivera-Letelier \cite{FR09}, Thuillier \cite{ThuillierThesis}, 
and also Jonsson \cite{Jonsson15}. 
In the following, we adopt the notation in \cite{OkuDivisor}. 


\begin{notation}[Potential theory on $\sP^1$]\label{th:notation}
 Let $\pi:K^2\setminus\{(0,0)\}\to\bP^1=\bP^1(K)$ be the canonical projection
so that $\pi(p_0,p_1)=p_1/p_0\in K$ if $p_0\neq 0$ and that $\pi(0,1)=\infty$.
On $K^2$, let $\|(p_0,p_1)\|$ be 
the maximal norm $\max\{|p_0|,|p_1|\}$ (for non-archimedean $K$) 
or the Euclidean norm $\sqrt{|p_0|^2+|p_1|^2}$ (for archimedean $K$).
With the wedge product $(z_0,z_1)\wedge(w_0,w_1):=z_0w_1-z_1w_0$ 
on $K^2$, 
the {\itshape normalized chordal metric} $[z,w]$ on $\bP^1$ is 
the function
\begin{gather*}
 (z,w)\mapsto [z,w]:=|p\wedge q|/(\|p\|\cdot\|q\|)
\end{gather*}
on $\bP^1\times\bP^1$, where $p\in\pi^{-1}(z),q\in\pi^{-1}(w)$.
For non-archimedean $K$,
the {\itshape generalized Hsia kernel} 
$[\cS,\cS']_{\can}$ on $\sP^1$ {\itshape with respect to} $\cS_{\can}$
is the unique (jointly) upper semicontinuous 
and separately continuous extension to $\sP^1\times\sP^1$ 
of the chordal distance function $(z,w)\mapsto [z,w]$ on $\bP^1\times\bP^1$ 
(see \S \ref{th:kernels} for the definition of $[\cS,\cS']_{\can}$).
For archimedean $K$, by convention,
the kernel function $[z,w]_{\can}$ on $\sP^1\cong\bP^1$
is defined by $[z,w]$ itself. 

Let $\delta_{\cS}$ be the Dirac measure on $\sP^1$
at a point $\cS\in\sP^1$. The probability Radon measure $\Omega_{\can}$
on $\sP^1$ is defined as
\begin{gather*}
 \Omega_{\can}:=\begin{cases}
		\delta_{\cS_{\can}} & \text{if $K$ is non-archimedean},\\
		\omega & \text{if $K$ is archimedean},
	       \end{cases}
\end{gather*}
where $\cS_{\can}$ is
the {\itshape canonical $($or Gauss$)$ point}
in $\sP^1$ for non-archimedean $K$ (see \S \ref{th:berkovich} for
the definition), 
and $\omega$ is the Fubini-Study area element on $\bP^1$
normalized as $\omega(\bP^1)=1$ for archimedean $K$. 
The Laplacian $\Delta$ on $\sP^1$ is normalized so that for each $\cS'\in\sP^1$, 
\begin{gather*}
 \Delta\log[\cdot,\cS']_{\can}=\delta_{\cS'}-\Omega_{\can}
\end{gather*}
on $\sP^1$
(for non-archimedean $K$, see \cite[\S5.4]{BR10}, \cite[\S2.4]{FR09}; 
in \cite{BR10} the opposite sign convention on $\Delta$ is adopted).

A {\itshape continuous weight $g$ on $\sP^1$} is
a continuous function on $\sP^1$ such that
$\mu^g:=\Delta g+\Omega_{\can}$ 
is a probability Radon measure on $\sP^1$.
For a continuous weight $g$ on $\sP^1$,
the $g$-{\itshape potential kernel} on $\sP^1$
(or the {\itshape negative of} an Arakelov Green kernel function on $\sP^1$
relative to $\mu^g$ \cite[\S 8.10]{BR10}) is a (jointly) 
upper semicontinuous function
\begin{gather}
 \Phi_g(\cS,\cS'):=\log[\cS,\cS']_{\can}-g(\cS)-g(\cS')\label{eq:kernel} 
\end{gather}
on $\sP^1\times\sP^1$, and the {\itshape $g$-equilibrium energy}
$V_g$ {\itshape of $\sP^1$} (in fact $V_g\in\bR$)
is the supremum of the $g$-energy functional
$\nu\mapsto\int_{\sP^1\times\sP^1}\Phi_g\rd(\nu\times\nu)\in[-\infty,+\infty)$
over all probability Radon measures $\nu$ on $\sP^1$. 
A probability Radon measure $\nu$ on $\sP^1$ at which
the above $g$-energy functional attains the supremum $V_g$
is called a {\itshape $g$-equilibrium mass distribution on} $\sP^1$;
in fact, $\mu^g$ is the unique $g$-equilibrium mass distribution on $\sP^1$
(for non-archimedean $K$, see \cite[Theorem 8.67, Proposition 8.70]{BR10}).

A {\itshape normalized weight $g$ on} $\sP^1$ 
is a continuous weight on $\sP^1$ satisfying $V_g=0$
(for every continuous weight $g$ on $\sP^1$, $\overline{g}:=g+V_g/2$ is
the unique normalized weight on $\sP^1$ such that $\mu^{\overline{g}}=\mu^g$).
\end{notation}

One of our principal results is the following Mahler-type estimate.

\begin{mainth}\label{th:Holder}
Let $K$ be an algebraically closed field of possibly positive characteristic
that is complete with respect to
a non-trivial and possibly non-archimedean absolute value.
Let $g$ be a normalized weight on $\sP^1$, and 
suppose that the restriction $g|\bP^1$ is a $1/\kappa$-H\"older continuous function
on $(\bP^1,[z,w])$ for some $\kappa\ge 1$.
Then setting $C:=\sup_{z,w\in\bP^1:\,\text{distinct}}|g(z)-g(w)|/[z,w]^{1/\kappa}\in\bR_{\ge 0}$, for every non-empty finite subset $F$ in $\bP^1$, we have
\begin{multline}
\sum_{z\in F}\sum_{w\in F\setminus\{z\}}\Phi_g(z,w)\\
 \le\kappa\cdot(\#F)\log(\#F)
 +2(\#F)\Bigl(C'+\epsilon_K\cdot(\#F)^{1-\kappa}+\sup_{\sP^1}|g|\Bigr),
 \tag{$\ref{eq:mahler}'$}
\label{eq:maherHolder}
\end{multline}
where we also set $C':=C\cdot 2^{1/\kappa}$ if $K$ is non-archimedean, and  
$C':=C$ otherwise, 
and $\epsilon_K:=1$ if $K$ is archimedean, and $\epsilon_K:=0$
otherwise.

In particular,
\begin{gather}
 \limsup_{N\to\infty}\biggl(\sup_{F\subset\bP^1:\,0<\#F\le N}
 \frac{\sum_{z\in F}\sum_{w\in F\setminus\{z\}}\Phi_g(z,w)}{(\#F)\log(\#F)}\biggr)\le\kappa.\label{eq:asymp}
\end{gather} 
\end{mainth}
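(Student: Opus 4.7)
The plan is to exploit the defining property $V_g = 0$ of a normalized weight by applying the $g$-energy inequality
\begin{equation*}
\int_{\sP^1 \times \sP^1} \Phi_g\,\rd(\nu \times \nu) \le V_g = 0
\end{equation*}
(valid for any probability Radon measure $\nu$ on $\sP^1$) to a suitable \emph{smearing} of the empirical measure $\nu_F := (\#F)^{-1}\sum_{z \in F}\delta_z$, and then to recover the Fekete sum through the H\"older control of $g$.

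Set $N := \#F$. For a scale $r > 0$ to be chosen at the end, I would replace each point mass $\delta_z$ ($z \in F$) by a probability Radon measure $\widetilde\delta_{z,r}$ on $\sP^1$ smeared at scale $r$: in the non-archimedean case, $\widetilde\delta_{z,r}$ is the Dirac mass at the type II point of $\sP^1$ corresponding to the closed disk of radius $r$ around $z$ in $K$; in the archimedean case, it is the normalized uniform measure on the chordal circle of radius $r$ about $z$. Both enjoy a potential-theoretic smearing identity of the form
\begin{equation*}
\int_{\sP^1}\log[\cS,\cS']_{\can}\,\rd\widetilde\delta_{z,r}(\cS') = \log\max\{[\cS,z], r\} + O(1),
\end{equation*}
which I expect is the content of \S\ref{th:reguFekete}. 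Applying the energy bound to $\widetilde\nu_{F,r} := N^{-1}\sum_{z \in F}\widetilde\delta_{z,r}$ and expanding yields
\begin{equation*}
\sum_{z,w \in F}\int_{\sP^1 \times \sP^1}\Phi_g\,\rd(\widetilde\delta_{z,r} \times \widetilde\delta_{w,r}) \le 0,
\end{equation*}
which is presumably the ``simple formula on weighted Fekete sums'' invoked in \S\ref{sec:Fekete}.

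For off-diagonal pairs ($z \ne w$, once $r$ is smaller than $[z,w]$) the smearing identity matches the chordal part of the integrand with $\log[z,w]$ up to an archimedean correction, while the $1/\kappa$-H\"older hypothesis compares the $g$-part with $-g(z)-g(w)$ within $2Cr^{1/\kappa}$ per pair (inflated by $2^{1/\kappa}$ in the non-archimedean case to account for the diameter of $\supp\widetilde\delta_{z,r}$ in $\bP^1$). For diagonal pairs ($z = w$) the self-energy integral evaluates to $\log r - 2g(z) + O(1)$, contributing at least $N\log r - 2N\sup_{\sP^1}|g|$ in total. Rearranging yields
\begin{equation*}
\sum_{z\in F}\sum_{w\in F\setminus\{z\}}\Phi_g(z,w) \le -N\log r + 2C' N^2 r^{1/\kappa} + 2N\sup_{\sP^1}|g| + (\text{archimedean correction}),
\end{equation*}
and the choice $r := N^{-\kappa}$ balances $-N\log r = \kappa N\log N$ against $2 C' N^2 r^{1/\kappa} = 2C' N$, while the archimedean correction at the same scale assembles into the additive term $2\epsilon_K N^{2-\kappa}$. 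The asymptotic consequence \eqref{eq:asymp} is immediate on dividing by $N\log N$ and letting $N \to \infty$. The main technical obstacle I anticipate is the precise bookkeeping of all constants, so that the dichotomy $C' = C\cdot 2^{1/\kappa}$ versus $C' = C$ emerges exactly from the (chordal) diameter of $\supp\widetilde\delta_{z,r}$ in each setting, and so that the archimedean kernel correction assembles into precisely the stated $\epsilon_K(\#F)^{1-\kappa}$ term rather than something larger.
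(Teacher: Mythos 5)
Your overall strategy --- regularize the empirical measure at scale $\epsilon$, apply the energy inequality $\int_{\sP^1\times\sP^1}\Phi_g\,\rd(\nu\times\nu)\le V_g=0$, bound the regularized Fekete sum from below by the true one plus diagonal self-energies and H\"older corrections, and optimize at $\epsilon=N^{-\kappa}$ --- is exactly the paper's (Theorem \ref{th:mahler} combined with Lemma \ref{th:comparable}). But there is a genuine gap in your diagonal bookkeeping. The regularization you describe for $z\in K$ (the disk $\{w:|w-z|\le r\}$ in the non-archimedean case, the circle $|w-z|=r$ in the archimedean case) is taken at \emph{affine} scale, and its self-energy with respect to the chordal kernel is not $\log r+O(1)$ uniformly: by \eqref{eq:reglower} it is bounded below only by $\log r+2\log[z,\infty]-2g(z)-2\hat{\eta}_{g,\{z\}}(r)$, and the term $2\log[z,\infty]$ (which equals $-2\log\max\{1,|z|\}$ up to normalization) tends to $-\infty$ as $z\to\infty$. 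Summing over the diagonal therefore leaves the extra positive term $-2\sum_{w\in F}\log[w,\infty]$ on the right-hand side of your final estimate --- this is precisely the weaker bound \eqref{eq:finite} recorded in the Remark after Theorem \ref{th:mahler}, and it is unbounded over the family of all $F$ with $\#F=N$, so it cannot be absorbed into $2N(C'+\epsilon_K N^{1-\kappa}+\sup_{\sP^1}|g|)$. (You also leave the case $\infty\in F$ undefined, since there is no affine disk about $\infty$.)

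The paper removes this defect by an induction on $\#F$: using the $U_K$-invariance of $[\cdot,\cdot]_{\can}$, of $\sd$, and of the Fekete sum (Lemmas \ref{th:pullback} and \ref{th:pullbackprod}), one may rotate $F$ so that $\infty\in F$; the point at infinity is regularized by $[\infty]_\epsilon=\iota_*[0]_\epsilon$, whose self-energy is exactly $\log\epsilon-2g(\infty)-2\hat{\eta}_{g,\{\infty\}}(\epsilon)$ with no chordal loss; and the formula \eqref{eq:formula} peels off the cross terms $\Phi_g(w,\infty)$, whose sum over $w\in F\setminus\{\infty\}$ contains $2\sum_{w}\log[w,\infty]$ and cancels the offending diagonal contribution, reducing the claim to the induction hypothesis for $F\setminus\{\infty\}$. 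You would need to add this step, or else replace your smearing by a genuinely $U_K$-equivariant, chordal-scale one and prove uniform self-energy and off-diagonal bounds for it, which is not automatic. A lesser issue: your smearing identity with a generic ``$+O(1)$'' error is too lossy for the $N^2$ off-diagonal pairs; the off-diagonal error must be controlled purely by the modulus of continuity of $g$ (plus the additive $\epsilon$ per pair in the archimedean case, which at $\epsilon=N^{-\kappa}$ assembles into the $\epsilon_K$ term), as in \eqref{eq:reglower}, since an $O(1)$ loss per pair would contribute $O(N^2)$ and swamp the $\kappa N\log N$ main term.
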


Theorem \ref{th:Holder} is a consequence of \eqref{eq:mahler}
in Theorem \ref{th:mahler} stated and shown in Section \ref{sec:mahler}, 
which is a little technical but
applies to any normalized weight $g$ on $\sP^1$, involving
the {\itshape restricted} modulus of continuity
\begin{gather}
 \eta_{g,F}(\epsilon)
 :=\max_{z\in F}\biggl(\sup_{\cS\in\sP^1:\, \sd(z,\cS)\le\epsilon}|g(z)-g(\cS)|\biggr)\quad\text{on }[0,1]\label{eq:moduluspre}
\end{gather}
of $g$ {\itshape around} a non-empty finite subset $F$ in $\bP^1$
with respect to the metric
$\sd$ on $\sP^1$ (see \S \ref{th:kernels} for the definition of $\sd$).
%
The estimate \eqref{eq:BB} is obtained as a special case of Theorem \ref{th:Holder}
by recalling the following.

\begin{fact}\label{th:dynamics}
 For every $f\in K(z)$ of degree $d>1$,
 whose action on $\bP^1$ canonically extends to that on $\sP^1$,
 there is the weak limit 
$\mu_f=\lim_{n\to\infty}(f^n)^*\Omega_{\can}/d^n$ 
on $\sP^1$, which is called  
 the $f$-{\itshape equilibrium $(${\rm or} canonical$)$} measure
 on $\sP^1$ (for non-archimedean $K$,
 see \cite[\S10]{BR10}, \cite[\S2]{ChambertLoir06}, \cite[\S3.1]{FR09}).
 We call the unique normalized weight $g$ on $\sP^1$ such that
 $\mu^g=\mu_f$ on $\sP^1$
 the $f$-{\itshape dynamical Green function} on $\sP^1$ and denote it by $g_f$.
 It is known that $f:(\bP^1,[z,w])\to(\bP^1,[z,w])$ is Lipschitz continuous
 (for non-archimedean $K$, see \cite[Theorem 2]{KS09}) and that
 if for every $n\in\bN$, $f^n:(\bP^1,[z,w])\to(\bP^1,[z,w])$ 
 is $M_n$-Lipschitz continuous for some $M_n>d^n$,
 then  the restriction $g_f|\bP^1$ is
 $1/\kappa$-H\"older continuous on $(\bP^1,[z,w])$ 
 for every $\kappa>\limsup_{n\to\infty}(\log(M_n^{1/n}))/\log d$
 (see e.g.\ \cite[\S 6.6]{FR06}). 
\end{fact}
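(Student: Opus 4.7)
The plan is to derive Theorem~\ref{th:Holder} from the more general (and more technical) Theorem~\ref{th:mahler} of Section~\ref{sec:mahler}, which bounds $\sum_{z\in F}\sum_{w\in F\setminus\{z\}}\Phi_g(z,w)$ in terms of the restricted modulus of continuity $\eta_{g,F}(\epsilon)$ from \eqref{eq:moduluspre} and a free scale parameter $\epsilon\in(0,1]$. Two steps are needed: first, convert the H\"older hypothesis on $(\bP^1,[z,w])$ into an estimate $\eta_{g,F}(\epsilon)\le C'\epsilon^{1/\kappa}$ valid on all of $\sP^1$; second, tune $\epsilon$ as a function of $\#F$ so that the generic bound of Theorem~\ref{th:mahler} collapses to the claimed \eqref{eq:maherHolder}.

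For the first step, fix $z\in F$ and $\cS\in\sP^1$ with $\sd(z,\cS)\le\epsilon$. In the archimedean case $\sP^1\cong\bP^1$ and $\sd$ is comparable with the chordal distance, so $|g(z)-g(\cS)|\le C\epsilon^{1/\kappa}$ is immediate with $C'=C$. In the non-archimedean case $\cS$ need not be classical, but $\bP^1$ is dense in $\sP^1$ and $g$ is continuous on $\sP^1$, so $\cS$ can be approximated by classical points $w\in\bP^1$; the standard comparison between the path metric $\sd$ on $\sP^1$ and the Hsia kernel $[\cdot,\cdot]_{\can}$, recalled in \S\ref{th:kernels}, yields $[z,w]\le 2\epsilon$ in the limit, and the H\"older bound then gives $|g(z)-g(\cS)|\le C(2\epsilon)^{1/\kappa}=C'\epsilon^{1/\kappa}$, which is the source of the factor $2^{1/\kappa}$ defining $C'$.

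Inserting this modulus estimate into Theorem~\ref{th:mahler} and choosing $\epsilon$ of order $(\#F)^{-\kappa}$ turns the generic logarithmic contribution $(\#F)\log(1/\epsilon)$ into $\kappa(\#F)\log(\#F)$, while the term involving $\eta_{g,F}(\epsilon)$ reduces to $O(C'\cdot\#F)$; the residual $\epsilon_K\cdot(\#F)^{1-\kappa}$ reflects a term in Theorem~\ref{th:mahler} that survives only in the archimedean case, where $\Omega_{\can}$ is the smooth Fubini--Study form rather than the Dirac mass at $\cS_{\can}$. Combining these ingredients produces \eqref{eq:maherHolder}, and the asymptotic \eqref{eq:asymp} follows at once by dividing both sides by $(\#F)\log(\#F)$ and letting $\#F\to\infty$, since the remainder is $O(\#F)$. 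The main obstacle is the non-archimedean part of the first step: isolating the precise factor $2^{1/\kappa}$ in $C'$ requires combining the density of $\bP^1$ in $\sP^1$ with the explicit relation between $\sd$ and $[\cdot,\cdot]_{\can}$, because the H\"older hypothesis controls $g$ only on classical points whereas $\eta_{g,F}$ quantifies the oscillation of $g$ over the full Berkovich line.
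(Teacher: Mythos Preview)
Your proposal does not address the stated item. The statement you were asked to discuss is Fact~\ref{th:dynamics}, which in the paper is not proved at all: it is a summary of known results (existence of $\mu_f$, definition of $g_f$, Lipschitz continuity of $f$, and the H\"older exponent estimate for $g_f$), each attributed to the cited references \cite{BR10,ChambertLoir06,FR09,KS09,FR06}. There is therefore no ``paper's own proof'' of Fact~\ref{th:dynamics} to compare against, and a proof proposal for it would amount to reproducing substantial portions of those works.

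What you have actually written is a proof sketch for Theorem~\ref{th:Holder}. Read as such, it is correct and follows essentially the same route as the paper's proof in Section~\ref{sec:mahler}: the paper isolates your first step as Lemma~\ref{th:comparable}, proving $|g(z)-g(\cS)|\le C'\sd(z,\cS)^{1/\kappa}$ on $\bP^1\times\sP^1$ by first extending the H\"older bound from $\bP^1$ to $\sP^1$ via continuity of $g$ and of $[z,\cdot]_{\can}$ together with density of $\bP^1$, and then invoking Lemma~\ref{th:comparison} ($\sd(z,\cS)\ge[z,\cS]_{\can}/2$) to pick up the factor $2^{1/\kappa}$. Your second step, substituting $\epsilon=(\#F)^{-\kappa}$ into \eqref{eq:mahler}, is exactly what the paper does. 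One minor inaccuracy: the residual $\epsilon_K\cdot(\#F)^{1-\kappa}$ does not come from $\Omega_{\can}$ being the Fubini--Study form; it comes from the additive $\epsilon$ in the definition \eqref{eq:modulus} of $\hat{\eta}_{g,F}$, which in turn traces back to the estimate \eqref{eq:infty} in the regularization lemmas of Section~\ref{sec:lower}.
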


\subsection*{Organization of the article} In Section \ref{sec:background},
we recall background from potential theory on $\sP^1$ including
a few preparatory lemmas and facts, and
some details on the regularization of Dirac measures supported in $\bP^1$.
In Section \ref{sec:mahler}, we state and show Theorem \ref{th:mahler},
and then deduce Theorem \ref{th:Holder} from Theorem \ref{th:mahler}.
In Section \ref{sec:lower},
we include a deduction of the lower estimate \eqref{eq:lower}
of regularized Fekete sums, which plays a key role in the proof of
Theorem \ref{th:mahler}. In Section \ref{eq:Lipschitz}, we include
a few examples, for which \eqref{eq:asymp} is optimal.

\section{Background from potential theory on $\sP^1$}\label{sec:background}

Let $K$ be an algebraically closed field 
that is complete with respect to
a non-trivial 
absolute value $|\cdot|$. 

For more details including references of this section, see \cite{OkuDivisor}.

\subsection{The Berkovich projective line $\sP^1$ for non-archimedean $K$}\label{th:berkovich}
Suppose that $K$ is non-archimedean in this subsection.
A subset $B$ in $K$ is a ($K$-closed) {\itshape disk in} $K$
if this is written as
$\{z\in K:|z-a|\le r\}$ for some $a\in K$ and some $r\ge 0$.
By the strong triangle inequality,
{\itshape two disks in $K$ either nest or are disjoint}.
This alternative extends to any two decreasing infinite sequences 
of disks in $K$ so that they either {\itshape infinitely nest}
or {\itshape are eventually disjoint}, and induces 
the so called {\itshape cofinal} equivalence relation among them.
As a set, the set of all cofinal equivalence classes $\cS$ of
decreasing infinite sequences $(B_n)$ of disks in $K$ and in addition $\infty\in\bP^1$ is nothing but $\sP^1$ (\cite[p.\ 17]{Berkovichbook}); 
if $\cS\neq\infty$, then $B_{\cS}:=\bigcap_n B_n$ is independent of
the choice of $(B_n)$ and is itself a disk in $K$ unless $B_{\cS}=\emptyset$.
For examples,
the {\itshape canonical $($or Gauss$)$ point} $\cS_{\can}$ in $\sP^1$ is
 the cofinal equivalence class of the constant sequence $(B_n)$ 
 of disks $B_n\equiv\mathcal{O}_K$ in $K$, 
 where $\mathcal{O}_K:=\{z\in K:|z|\le 1\}$ is the ring of $K$-integers.
 Each $z\in\bP^1$ is identified with
 the cofinal equivalence class of the constant sequence $(B_n)$ 
 of disks $B_n\equiv\{z\}$ in $K$.

The above alternative among decreasing infinite sequences of disks in $K$ 
also induces a partial ordering $\succeq$ on $\sP^1$ so that for every
$\cS,\cS'\in\sP^1\setminus\{\infty\}$ satisfying 
$B_{\cS},B_{\cS'}\neq\emptyset$,
$\cS\succeq\cS'$ if and only if $B_{\cS}\supset B_{\cS'}$
(the description of $\succeq$ in the case that $B_{\cS}$ or
$B_{\cS'}$ is empty is a little complicate) and that $\infty\succeq\cS$
for every $\cS\in\sP^1$, 
and endows $\sP^1$
with the canonical {\itshape tree} structure
in the sense of Jonsson \cite[\S 2, Definition 2.2]{Jonsson15}.

The topology of $\sP^1$ coincides with the weak topology
induced by the tree structure of $\sP^1$, and
$\sP^1$ is uniquely arcwise-connected and contains both
$\bP^1$ and $\sH^1=\sH^1(K):=\sP^1\setminus\bP^1$ as dense subsets.

\subsection{The kernels $[\cS,\cS']_{\can}$ and $|\cS-\cS'|_\infty$ and the distance $\sd$ on $\sP^1$}\label{th:kernels}
Suppose first that $K$ is non-archimedean. 
Let $\diam B$ be the diameter of a disk $B$ in $K$ with respect to $|\cdot|$.
For every $\cS\in\sP^1\setminus\{\infty\}$ represented by 
a decreasing infinite sequence $(B_n)$ of disks in $K$, set 
\begin{gather*}
 \diam\cS
 :=\lim_{n\to\infty}\diam B_n(=\diam B_{\cS}\text{ unless }B_{\cS}=\emptyset),
\end{gather*}
which is independent of the choice of $(B_n)$, and set $\diam\infty=+\infty$.
For every $\cS,\cS'\in\sP^1$, let $\cS\wedge\cS'$ be the smallest
$\cS''\in\sP^1$ satisfying $\cS''\succeq\cS$ and $\cS''\succeq\cS'$.
Under the convention that $\infty/(\infty^2)=0$,
the generalized Hsia kernel $[\cS,\cS']_{\can}$ on $\sP^1$
with respect to $\cS_{\can}$ is defined by the function
\begin{gather}
[\cS,\cS']_{\can}
:=\frac{\diam\cS''}{(\diam(\cS_{\can}\wedge\cS''))^2}\label{eq:Gromov}
\end{gather}
on $\sP^1\times\sP^1$,
where $\cS''$ is the unique point in $\sP^1$ lying between $\cS$ and $\cS'$,
between $\cS'$ and $\cS_{\can}$, and between $\cS_{\can}$ and $\cS$
with respect to $\succeq$ (see \cite[\S 3.4]{FR06}, \cite[\S 4.4]{BR10}).
Then, as mentioned in Section \ref{sec:intro},
the kernel function $(\cS,\cS')\mapsto[\cS,\cS']_{\can}$ on $\sP^1\times\sP^1$
is the unique (jointly)
upper semicontinuous and separately continuous extension 
to $\sP^1\times\sP^1$ of the chordal distance function 
$(z,w)\mapsto[z,w]$ on $\bP^1\times\bP^1$.
If $K$ is archimedean, then $[z,w]_{\can}$ 
is defined by $[z,w]$ itself on $\sP^1\cong\bP^1$, by convention.  

No matter whether $K$ is archimedean or non-archimedean,
set the function
\begin{gather}
 \sd(\cS,\cS'):=[\cS,\cS']_{\can}
-\frac{[\cS,\cS]_{\can}+[\cS',\cS']_{\can}}{2}\label{eq:small}
\end{gather}
on $\sP^1\times\sP^1$.
If $K$ is archimedean, then the function $\sd(z,w)$ on $\sP^1\times\sP^1$
is nothing but
the chordal metric $[z,w]$ on $\sP^1\cong\bP^1$. If $K$ is 
non-archimedean, the function $\sd$ extends $[z,w]$ to $\sP^1$ 
as a metric on $\sP^1$ (see \cite[\S 4.7]{FR06}, \cite[\S 2.7]{BR10}),
and is called the {\itshape small model metric} on $\sP^1$.

Although the difference $\cS-\cS'$ between $\cS,\cS'\in\sP^1$
is defined only if both $\cS,\cS'$ are in $K$, set the function 
\begin{gather}
 |\cS-\cS'|_{\infty}
:=\frac{[\cS,\cS']_{\can}}{[\cS,\infty]_{\can}\cdot[\cS',\infty]_{\can}}\label{eq:Hsia}
\end{gather}
on $\sP^1\times\sP^1$,
under the convention $0/(0^2)=\infty$. If $K$ is archimedean, then
the restriction $|z-w|_\infty$ to $K\times K$
is nothing but the euclidean metric $|z-w|$ on $K\cong\bC$.
If $K$ is non-archimedean, $|\cS-\cS'|_{\infty}$
is the unique (jointly)
upper semicontinuous and separately continuous extension 
to $\sP^1\times\sP^1$ of 
the distance function
$(z,w)\mapsto|z-w|=[z,w]/([z,\infty]\cdot[w,\infty])$ on $K\times K$, 
and is called
the (original) {\itshape Hsia kernel} on $\sP^1$ (see \cite[\S 4.4]{BR10}).


\begin{lemma}\label{th:comparison}
On $\bP^1\times\sP^1$, $\sd(z,\cS)\ge[z,\cS]_{\can}/2$. 
\end{lemma}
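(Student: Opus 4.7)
The plan is to reduce the inequality, by expanding $\sd$ via \eqref{eq:small} and using that $[z,z]_{\can}=0$ for every $z\in\bP^1$, to the equivalent statement
\[
 [z,\cS]_{\can}\ge [\cS,\cS]_{\can}\qquad\text{for all }z\in\bP^1,\ \cS\in\sP^1.
\]
The archimedean case and the non-archimedean case with $\cS\in\bP^1$ are both trivial because then $[\cS,\cS]_{\can}=0$; the equality $[z,z]_{\can}=0$ in the non-archimedean case follows from formula \eqref{eq:Gromov} together with $\diam z=0$ for $z\in\bP^1$.

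For non-archimedean $K$ with $\cS\in\sH^1$, I would set $\cS_1:=\cS_{\can}\wedge\cS$, so that $[\cS,\cS]_{\can}=\diam\cS/(\diam\cS_1)^2$ by \eqref{eq:Gromov}. Applying \eqref{eq:Gromov} to the pair $(z,\cS)$ then gives $[z,\cS]_{\can}=\diam\cS''/(\diam(\cS_{\can}\wedge\cS''))^2$, where $\cS''$ is the tripod branch point of $\{z,\cS,\cS_{\can}\}$. Since $z\in\bP^1$, the point $\cS''$ must lie on the tree-geodesic from $\cS_{\can}$ to $\cS$, which splits with respect to $\succeq$ into two segments meeting at the apex $\cS_1$. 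I would case-split accordingly.

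In the first (descending) case, $\cS_1\succeq\cS''\succeq\cS$, so $\cS_{\can}\wedge\cS''=\cS_1$ and $\diam\cS''\ge\diam\cS$, giving $[z,\cS]_{\can}\ge[\cS,\cS]_{\can}$ by direct comparison of numerators. In the second (ascending) case, $\cS_1\succeq\cS''\succeq\cS_{\can}$, so $\cS_{\can}\wedge\cS''=\cS''$ and $[z,\cS]_{\can}=1/\diam\cS''$; then $1/\diam\cS''\ge 1/\diam\cS_1\ge\diam\cS/(\diam\cS_1)^2=[\cS,\cS]_{\can}$ using the monotonicity $\diam\cS\le\diam\cS_1$ of $\diam$ along $\succeq$. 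The main obstacle is the ascending subcase, where the formula for $[z,\cS]_{\can}$ takes a structurally different shape (the denominator absorbs the $\diam\cS''$ from the numerator), so one must chain two monotonicity inequalities rather than comparing numerators directly; the descending case and the archimedean/classical-$\cS$ cases are essentially immediate.
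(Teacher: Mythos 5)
Your argument is correct and essentially the same as the paper's: both reduce, via $\sd(z,\cS)=[z,\cS]_{\can}-[\cS,\cS]_{\can}/2$, to the inequality $[z,\cS]_{\can}\ge[\cS,\cS]_{\can}$ and verify it from \eqref{eq:Gromov} by a case analysis on where the tripod point of $\{z,\cS,\cS_{\can}\}$ sits relative to $\cS_{\can}\wedge\cS$, using monotonicity of $\diam$ along $\succeq$. One cosmetic point: for $z=\infty$ one has $\diam\infty=+\infty$ rather than $0$, so the needed equality $[\infty,\infty]_{\can}=0$ should be read off from the paper's convention $\infty/(\infty^2)=0$ rather than from ``$\diam z=0$ for $z\in\bP^1$''.
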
 

\begin{proof}
There is nothing to show for archimedean $K$, so suppose that
$K$ is non-archimedean. For every $z\in\bP^1$ and every $\cS\in\sP^1$,
 by the definition \eqref{eq:small} of the metric $\sd$,
 we have $\sd(z,\cS)=[z,\cS]_{\can}-[\cS,\cS]_{\can}/2$, and
 by the definition \eqref{eq:Gromov} of the kernel $[\cS,\cS']_{\can}$, 
 we have
\begin{align*}
 [z,\cS]_{\can}=&\begin{cases}
		 \diam(z\wedge\cS) & 
\text{if }\cS_{\can}\succeq\cS\text{ and }\cS_{\can}\succeq z,\\
		 \diam\cS_{\can} &
\text{if }\cS_{\can}\succeq\cS\text{ and }\cS_{\can}\not\succeq z,\\
		 1/\diam(\cS_{\can}\wedge z) &
\text{if }\cS_{\can}\not\succeq\cS\text{ and }\cS_{\can}\wedge\cS\succeq\cS_{\can}\wedge z,\\
		 1/\diam(\cS_{\can}\wedge\cS) &
\text{if }\cS_{\can}\not\succeq\cS\text{ and }\cS_{\can}\wedge z\succeq\cS_{\can}\wedge\cS
		\end{cases}\\
\ge&
 \begin{cases}
  \diam\cS & \text{if }\cS_{\can}\succeq\cS,\\
  (\diam(\cS_{\can}\wedge\cS))/(\diam(\cS_{\can}\wedge\cS))^2 & 
\text{if }\cS_{\can}\not\succeq\cS
 \end{cases}\\
\ge&
\begin{cases}
 \diam\cS & \text{if }\cS_{\can}\succeq\cS,\\
 (\diam\cS)/(\diam(\cS_{\can}\wedge\cS))^2 & \text{if }\cS_{\can}\not\succeq\cS
\end{cases}\\
=&[\cS,\cS]_{\can},
\end{align*}
which completes the proof.
\end{proof}

\subsection{The isometry group $U_K$ on $(\sP^1,\sd)$}\label{th:isometry}
The action on $\bP^1$ of a linear fractional transformation $h\in\PGL(2,K)$ 
uniquely extends to $\sP^1$ as a continuous automorphism 
on $\sP^1$, and induces the pullback $h^*$
and the push-forward $h_*=(h^{-1})^*$
on the space of all continuous functions on $\sP^1$
and, by duality, the space of all probability 
Radon measures on $\sP^1$
(see e.g.\ \cite[\S 2.3]{BR10}). 
Let $U_K$ be either the subgroup $\PSU(2,K)$ (for archimedean $K\cong\bC$) or  
the subgroup $\PGL(2,\cO_K)$ (for non-archimedean $K$) in $\PGL(2,K)$.
Each $h\in U_K$ acts on $(\bP^1,[z,w])$ isometrically (for non-archimedean $K$,
see e.g.\ \cite[\S 1]{Benedetto03}). Hence each $h\in U_K$ not only satisfies 
$[h(\cS),h(\cS')]_{\can}=[\cS,\cS']_{\can}$ on $\sP^1\times\sP^1$
by the separate continuity of both sides on $\sP^1\times\sP^1$ and 
the density of $\bP^1$ in $\sP^1$, but also acts on $(\sP^1,\sd)$ isometrically
(recall the definition \eqref{eq:small} of $\sd$).
Moreover, for every $h\in U_K$, we have
$h^*\Omega_{\can}=\Omega_{\can}$
on $\sP^1$; indeed, fixing $\cS\in\sP^1$, we have
\begin{multline*}
 h^*\Omega_{\can}
 =h^*\delta_{\cS}-\Delta h^*\log[\cdot,\cS]_{\can}
 =h^*\delta_{\cS}-\Delta \log[h(\cdot),h(h^{-1}(\cS))]_{\can}\\
 =\delta_{h^{-1}(\cS)}-\Delta \log[\cdot,h^{-1}(\cS)]_{\can}
 =\delta_{h^{-1}(\cS)}-(\delta_{h^{-1}(\cS)}-\Omega_{\can})
 =\Omega_{\can}\quad\text{on }\sP^1
\end{multline*}
(for the functoriality $h^*\Delta=\Delta h^*$
for non-archimedean $K$, see e.g.\ \cite[\S9]{BR10}).


\begin{lemma}\label{th:pullback}
For every normalized weight $g$ on $\sP^1$ and every $h\in U_K$,
$g\circ h$ is also a normalized weight on $\sP^1$. 
\end{lemma}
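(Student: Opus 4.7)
The plan is to verify in turn each of the defining properties of a normalized weight for the function $g\circ h$, namely (a) continuity, (b) that $\mu^{g\circ h}$ is a probability Radon measure, and (c) that $V_{g\circ h}=0$. Property (a) is immediate since $h$ acts on $\sP^1$ as a continuous automorphism.

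For (b), I would exploit the functoriality $h^*\Delta=\Delta h^*$ (recalled in the paragraph preceding the lemma for non-archimedean $K$; in the archimedean case this is the standard change-of-variables for the Fubini--Study Laplacian under $\PSU(2,\bC)$) together with the identity $h^*\Omega_{\can}=\Omega_{\can}$ just established. Writing $g\circ h=h^*g$, this gives
\begin{equation*}
 \mu^{g\circ h}=\Delta(h^*g)+\Omega_{\can}=h^*(\Delta g)+h^*\Omega_{\can}=h^*(\Delta g+\Omega_{\can})=h^*\mu^g.
\end{equation*}
Since $h$ is a homeomorphism of $\sP^1$ onto itself, $h^*$ carries probability Radon measures to probability Radon measures, so $\mu^{g\circ h}=h^*\mu^g$ is a probability Radon measure on $\sP^1$. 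Thus $g\circ h$ is a continuous weight on $\sP^1$.

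For (c), the key ingredient is the isometric action of $h\in U_K$ on $(\sP^1,\sd)$ and, in particular, the identity $[h(\cS),h(\cS')]_{\can}=[\cS,\cS']_{\can}$ on $\sP^1\times\sP^1$. This yields
\begin{equation*}
 \Phi_{g\circ h}(\cS,\cS')=\log[\cS,\cS']_{\can}-g(h(\cS))-g(h(\cS'))=\Phi_g(h(\cS),h(\cS')),
\end{equation*}
so for every probability Radon measure $\nu$ on $\sP^1$,
\begin{equation*}
 \int_{\sP^1\times\sP^1}\Phi_{g\circ h}\,\rd(\nu\times\nu)=\int_{\sP^1\times\sP^1}\Phi_g\,\rd(h_*\nu\times h_*\nu).
\end{equation*}
As $\nu\mapsto h_*\nu$ is a bijection of the space of probability Radon measures on $\sP^1$, taking the supremum on both sides gives $V_{g\circ h}=V_g=0$, proving that $g\circ h$ is a normalized weight.

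The only place where anything subtle enters is the functoriality $h^*\Delta=\Delta h^*$ in step (b), but this is cited from the literature, so no genuine obstacle is expected; the remaining content is a formal change-of-variables.
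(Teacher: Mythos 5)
Your proof is correct and follows essentially the same route as the paper: the identity $\mu^{g\circ h}=h^*\mu^g$ via $h^*\Delta=\Delta h^*$ and $h^*\Omega_{\can}=\Omega_{\can}$, and then the invariance $[h(\cS),h(\cS')]_{\can}=[\cS,\cS']_{\can}$ to transport the energy functional. The only (harmless) difference is in concluding $V_{g\circ h}=V_g$: you change variables $\nu\mapsto h_*\nu$ and take suprema over all probability Radon measures, whereas the paper evaluates the $g\circ h$-energy at $h^*\mu^g$ using the characterization of $\mu^{g\circ h}$ as the unique equilibrium mass distribution; your version is marginally more self-contained since it does not invoke that the supremum is attained.
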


\begin{proof}
 We first compute as
 \begin{gather*}
 \mu^{g\circ h}=
 \Delta(g\circ h)+\Omega_{\can}
 =h^*(\Delta g)+\Omega_{\can}=h^*\mu^g-h^*\Omega_{\can}+\Omega_{\can}
 =h^*\mu^g,
 \end{gather*} 
 which is a probability Radon measure on $\sP^1$, so $g\circ h$ is
 a continuous weight on $\sP^1$. Next,
 by $[h(\cS),h(\cS')]_{\can}=[\cS,\cS']_{\can}$ on $\sP^1\times\sP^1$
 and the characterization of
 $\mu^g$ (resp.\ $\mu^{g\circ h}$) as the (unique) $g$-equilibrium 
 (resp.\ $g\circ h$-equilibrium) mass distribution on $\sP^1$,
 we have
 \begin{gather*}
 V_{g\circ h}=\int_{\sP^1\times\sP^1}\Phi_{g\circ h}\rd((h^*\mu^g)\times(h^*\mu^g))=\int_{\sP^1\times\sP^1}\Phi_g\rd(\mu^g\times\mu^g)=V_g=0,
 \end{gather*}
 which completes the proof.
\end{proof}

\subsection{A regularization of an effective divisor on $\sP^1$}
First, for every $z\in K$ and every $\epsilon>0$,
let us define the {\itshape $\epsilon$-regularization} of
the Dirac measure $\delta_z$ on $\sP^1$ by the probability
Radon measure
\begin{multline*}
 [z]_\epsilon:=\Delta\log\max\{\epsilon,|\cdot-z|_{\infty}\}+\delta_\infty\\
=\begin{cases}
	       \delta_{\pi_\epsilon(z)} & \text{for non-archimedean }K\\
	        m_{\{w\in K:|w-z|=\epsilon\}} & \text{for archimedean }K
	      \end{cases}
\end{multline*}
on $\sP^1$;
for non-archimedean $K$, the continuous mapping
$\pi_\epsilon:\bP^1\to\sH^1$ 
is defined so that for every $z\in K$,
$\pi_\epsilon(z)\in\sH^1$ is represented by
the constant sequence $(B_n)$ of disks $B_n\equiv\{w\in K:|w-z|\le\epsilon\}$
in $K$, and for archimedean $K$, $m_{\partial\bD(z,\epsilon)}$ 
is the $1$-dimensional Lebesgue measure
on the circle $\partial\bD(z,\epsilon)
:=\{w\in K:|w-z|=\epsilon\}$ in $K\cong\bC$ normalized as
$m_{\partial\bD(z,\epsilon)}(\partial\bD(z,\epsilon))=1$.
Next, with the involution $\iota(z):=1/z\in U_K$ on $\sP^1$, 
for every $\epsilon>0$, set
\begin{gather*}
 [\infty]_{\epsilon}:=\iota_*[0]_{\epsilon}.
\end{gather*}
Then for every $z\in\bP^1$ and every $\epsilon>0$, $[z]_\epsilon$ has
no atoms on $\bP^1$.
The following would justify the terminology
(cf.\ \cite[Lemme 4.8]{FR06},
\cite[\S 2.1]{FP15}).

\begin{lemma}\label{th:chordal}
For every $z\in\bP^1$ and every $\epsilon>0$,
the chordal potential 
$U^\#_{[z]_\epsilon}(\cdot):=\int_{\sP^1}\log[\cdot,\cS']_{\can}\rd[z]_\epsilon(\cS')$
of the $\epsilon$-regularization $[z]_{\epsilon}$ of $\delta_z$ on $\sP^1$
is a {\itshape continuous} function on $\sP^1$.
\end{lemma}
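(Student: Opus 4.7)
The plan is to reduce continuity of $U^\#_{[z]_\epsilon}$ to a setting where the potential has an explicit description.

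First I would dispatch $z=\infty$. By the definition $[\infty]_\epsilon=\iota_*[0]_\epsilon$ together with the isometry property $[\iota(\cS),\iota(\cS')]_{\can}=[\cS,\cS']_{\can}$ on $\sP^1\times\sP^1$ established in \S\ref{th:isometry}, a change of variables and the involution $\iota\circ\iota=\Id$ give
\begin{gather*}
 U^\#_{[\infty]_\epsilon}(\cS)=\int_{\sP^1}\log[\cS,\iota(\cS'')]_{\can}\,\rd[0]_\epsilon(\cS'')=\int_{\sP^1}\log[\iota(\cS),\cS'']_{\can}\,\rd[0]_\epsilon(\cS'')=U^\#_{[0]_\epsilon}(\iota(\cS))
\end{gather*}
on $\sP^1$. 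Since $\iota$ is a homeomorphism of $\sP^1$, continuity of $U^\#_{[\infty]_\epsilon}$ follows from that of $U^\#_{[0]_\epsilon}$, and I may restrict attention to $z\in K$.

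For non-archimedean $K$ and $z\in K$, the construction gives $[z]_\epsilon=\delta_{\pi_\epsilon(z)}$ with $\pi_\epsilon(z)\in\sH^1$, so $U^\#_{[z]_\epsilon}(\cdot)=\log[\cdot,\pi_\epsilon(z)]_{\can}$. It suffices to show that for any fixed $\cS_0\in\sH^1$, the function $\cS\mapsto\log[\cS,\cS_0]_{\can}$ is continuous on $\sP^1$. Unwinding the formula \eqref{eq:Gromov} in this setting, I would observe that with $\cS_0\in\sH^1$ and $\cS_{\can}$ fixed, the median point $\cS''$ of $(\cS,\cS_0,\cS_{\can})$ is the retract of $\cS$ onto the geodesic in $\sP^1$ from $\cS_{\can}$ to $\cS_0$; this retract depends continuously on $\cS$ in the tree topology, and both $\diam\cS''$ and $\diam(\cS_{\can}\wedge\cS'')$ remain bounded below by positive constants depending only on $\cS_0$. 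Hence $[\cS,\cS_0]_{\can}=\diam\cS''/(\diam(\cS_{\can}\wedge\cS''))^2$ is continuous, and so is its logarithm.

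For archimedean $K\cong\bC$ and $z\in\bC$, I would compute $U^\#_{[z]_\epsilon}$ explicitly. Decomposing
\begin{gather*}
 \log[w,\zeta]=\log|w-\zeta|-\tfrac{1}{2}\log(1+|w|^2)-\tfrac{1}{2}\log(1+|\zeta|^2)
\end{gather*}
for $w,\zeta\in\bC$ and integrating against $\rd m_{\partial\bD(z,\epsilon)}(\zeta)$, the classical circle mean value identity $\int_{\partial\bD(z,\epsilon)}\log|w-\zeta|\,\rd m(\zeta)=\log\max\{|w-z|,\epsilon\}$ yields
\begin{gather*}
 U^\#_{[z]_\epsilon}(w)=\log\max\{|w-z|,\epsilon\}-\tfrac{1}{2}\log(1+|w|^2)-C_{z,\epsilon}
\end{gather*}
on $\bC$ with a constant $C_{z,\epsilon}$. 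The first two terms are continuous on $\bC$, and using $[\infty,\zeta]=1/\sqrt{1+|\zeta|^2}$ together with $\log|w|-\tfrac{1}{2}\log(1+|w|^2)\to 0$ as $|w|\to\infty$, the function extends continuously to $\sP^1\cong\bP^1$.

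The main obstacle is conceptually the non-archimedean case, where the generalized Hsia kernel is only upper semicontinuous jointly on $\sP^1\times\sP^1$ and fails to be continuous precisely on the diagonal of $\bP^1\times\bP^1$. The whole point of the $\epsilon$-regularization is that $\pi_\epsilon(z)\in\sH^1$ sits off this singular locus by construction, so the claimed continuity is really an unpacking of this design choice, carried out through the formula \eqref{eq:Gromov}.
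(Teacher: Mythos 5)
Your proof is correct and takes essentially the same route as the paper's: the non-archimedean case reduces to continuity of $\log[\cdot,\pi_\epsilon(z)]_{\can}$ with $\pi_\epsilon(z)\in\sH^1$ (where the paper simply invokes separate continuity of the kernel, you unwind \eqref{eq:Gromov} via the tree retraction onto the segment $[\cS_{\can},\cS_0]$, which is a fine elaboration), the archimedean case is the same explicit circle-average computation, and $z=\infty$ is handled by pushing forward under $\iota$. One small correction to your closing remark, which does not affect the argument: joint continuity of $[\cS,\cS']_{\can}$ fails precisely on the diagonal of $\sH^1\times\sH^1$, not of $\bP^1\times\bP^1$ --- at a diagonal point $(z,z)$ with $z\in\bP^1$ one has $[z,z]_{\can}=0$, so upper semicontinuity together with nonnegativity of the kernel forces joint continuity there, whereas at $(\cS_0,\cS_0)$ with $\cS_0\in\sH^1$ one has $[\cS_0,\cS_0]_{\can}=\diam\cS_0>0$ while every neighborhood contains pairs $(z,w)\in\bP^1\times\bP^1$ with $[z,w]$ arbitrarily small.
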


\begin{proof}
 Fix $z\in\bP^1$ and $\epsilon>0$.
 For non-archimedean $K$, if $z\in K$, then 
 $U^\#_{[z]_\epsilon}(\cdot)\equiv\log[\cdot,\pi_\epsilon(z)]_{\can}$ 
 is continuous on $\sP^1$
 by $\pi_\epsilon(z)\in\sH^1$ and the separate continuity of 
 $[\cS,\cS']_{\can}$ on $\sP^1\times\sP^1$. 
 For archimedean $K\cong\bC$, if $z\in K$, then
\begin{multline*}
 w\mapsto U^\#_{[z]_\epsilon}(w)
=\int_0^{2\pi}\log[w,z+\epsilon e^{i\theta}]\frac{\rd\theta}{2\pi}\\
 =\int_0^{2\pi}\log|(w-z)-\epsilon e^{i\theta}|\frac{\rd\theta}{2\pi}
 -\log\sqrt{1+|w|^2}
 -\int_0^{2\pi}\log\sqrt{1+|z+\epsilon e^{i\theta}|^2}\frac{\rd\theta}{2\pi}\\
\equiv\log(\max\{|w-z|,\epsilon\})/\sqrt{1+|w|^2})
 +U^\#_{[z]_\epsilon}(\infty)
\end{multline*}
is continuous on $\sP^1\cong\bP^1$.
Finally, no matter whether
$K$ is non-archimedean or archimedean, 
$U^\#_{[\infty]_\epsilon}\equiv U^\#_{[0]_\epsilon}\circ\iota$
is continuous on $\sP^1$ 
by the above continuity of $U^\#_{[0]_\epsilon}$ on $\sP^1$ and
the continuity of $\iota:\sP^1\to\sP^1$.
\end{proof}

\subsection{The $g$-Fekete sum with respect to an effective divisor on $\bP^1$}\label{sec:Fekete}
Every effective divisor $\cZ$ on $\bP^1=\bP^1(K)$
is regarded as a
positive and discrete Radon measure
\begin{gather*}
 \sum_{w\in\supp\cZ}(\ord_w\cZ)\cdot\delta_w
\end{gather*}
on $\sP^1$, which is denoted by the same $\cZ$.
Let
$\diag_{\bP^1}$ be the diagonal in $\bP^1\times\bP^1$.

For a continuous weight $g$ on $\sP^1$, the {\itshape $g$-Fekete sum}
with respect to an effective divisor $\cZ$ on $\bP^1$ is defined by
\begin{multline*}
 (\cZ,\cZ)_g:=\int_{(\sP^1\times\sP^1)\setminus\diag_{\bP^1}}
\Phi_g\rd(\cZ\times\cZ)\\
=\sum_{z\in\supp\cZ}\sum_{w\in\supp\cZ\setminus\{z\}}(\ord_z\cZ)(\ord_w\cZ)\cdot\Phi_g(z,w)\in\bR,
\end{multline*}
whose sign convention 
is opposite to ones of Favre--Rivera-Letelier's
Dirichlet forms in \cite{DujardinFavre08} and
is compatible with the $\log$ of the original
Fekete product (\cite{Fekete30, Fekete33}) in Section \ref{sec:intro}.

Every non-empty finite subset $F$ in $\bP^1$ is canonically regarded as 
the effective divisor $\cZ_F$ on $\bP^1$ such that
$\supp\cZ_F=F$ and that $\ord_w\cZ_F=1$ for every $w\in F$.
For a continuous weight $g$ on $\sP^1$ and
a non-empty finite subset $F$ in $\bP^1$, we also 
define the $g$-Fekete sum with respect to $F$ by
\begin{gather*}
 (F,F)_g:=(\cZ_F,\cZ_F)_g=\sum_{z\in F}\sum_{w\in F\setminus\{z\}}\Phi_g(z,w),
\end{gather*}
which satisfies a formula
\begin{gather}
  (F,F)_g=2\cdot\sum_{w\in F\setminus\{z\}}\Phi_g(z,w)
 +(F\setminus\{z\},F\setminus\{z\})_g\label{eq:formula}
\end{gather} 
for every $z\in F$. 
Recall Lemma \ref{th:pullback} here.

\begin{lemma}\label{th:pullbackprod}
 For every normalized weight $g$ on $\sP^1$,
 every effective divisor $\cZ$ on $\bP^1$, and every $h\in U_K$,
\begin{gather*}
  (\cZ,\cZ)_g=(h^*\cZ,h^*\cZ)_{g\circ h}.
\end{gather*}
\end{lemma}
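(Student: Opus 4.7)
The plan is to reduce the identity to a pointwise identification of potential kernels followed by a simple change of variables under the action of $h$. The two ingredients I would lean on are the $U_K$-isometry $[h(\cS),h(\cS')]_{\can}=[\cS,\cS']_{\can}$ on $\sP^1\times\sP^1$ established in \S \ref{th:isometry}, and Lemma \ref{th:pullback}, which guarantees that $g\circ h$ is again a normalized weight so that the right-hand side is well-defined.

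First, I would unwind the definition \eqref{eq:kernel} of the potential kernel. Using the isometry property and the fact that $(g\circ h)(\cS)=g(h(\cS))$, for every $\cS,\cS'\in\sP^1$ I get the pointwise identity
\[
 \Phi_{g\circ h}(\cS,\cS')
 =\log[\cS,\cS']_{\can}-(g\circ h)(\cS)-(g\circ h)(\cS')
 =\Phi_g(h(\cS),h(\cS')).
\]
This is the only analytic content of the lemma; everything else is bookkeeping.

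Next, I would write out $(h^*\cZ,h^*\cZ)_{g\circ h}$ explicitly. Because $h\in U_K\subset\PGL(2,K)$ acts as a bijection on $\bP^1$, the pullback divisor is
\[
 h^*\cZ=\sum_{w\in\supp\cZ}(\ord_w\cZ)\,\delta_{h^{-1}(w)},
\]
with $\supp(h^*\cZ)=h^{-1}(\supp\cZ)$, and $h\times h$ restricts to a bijection of $(\sP^1\times\sP^1)\setminus\diag_{\bP^1}$ onto itself (it preserves $\bP^1$). Reindexing the defining double sum via $z=h^{-1}(w)$, $z'=h^{-1}(w')$ and applying the pointwise identity above yields
\[
 (h^*\cZ,h^*\cZ)_{g\circ h}
 =\sum_{w\in\supp\cZ}\sum_{w'\in\supp\cZ\setminus\{w\}}(\ord_w\cZ)(\ord_{w'}\cZ)\,\Phi_g(w,w')
 =(\cZ,\cZ)_g.
\]

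The only real ``obstacle'' is notational: one must keep the pullback conventions for divisors versus weights straight, and verify that $h\times h$ preserves the complement of $\diag_{\bP^1}$. Since elements of $U_K$ are unramified automorphisms of $\bP^1$, no multiplicity corrections appear, and the argument is essentially a formal change of variables; I do not anticipate any technical difficulty.
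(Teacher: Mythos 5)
Your proof is correct and follows essentially the same route as the paper: both arguments rest on the $U_K$-invariance of $[\cdot,\cdot]_{\can}$ to identify $\Phi_{g\circ h}$ with $\Phi_g\circ(h\times h)$ and then perform a change of variables (the paper phrases it as an integral over $(\sP^1\times\sP^1)\setminus\diag_{\bP^1}$, you as a reindexed double sum, which is the same computation). Your explicit appeal to Lemma \ref{th:pullback} to ensure the right-hand side is well-defined matches the paper's ``Recall Lemma \ref{th:pullback} here'' preceding the statement.
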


\begin{proof}
 By $[h^{-1}(\cS),h^{-1}(\cS')]_{\can}=[\cS,\cS']_{\can}$ 
 on $\sP^1\times\sP^1$ for every $h\in U_K$, 
 we have
 \begin{multline*}
 (\cZ,\cZ)_g=
 \int_{(\sP^1\times\sP^1)\setminus\diag_{\bP^1}}
 \Phi_g\rd(\cZ\times\cZ)\\
 =\int_{(\sP^1\times\sP^1)\setminus\diag_{\bP^1}}
 (\log[h^{-1}(\cS),h^{-1}(\cS')]_{\can}-g(\cS)-g(\cS'))
 \rd(\cZ\times\cZ)(\cS,\cS')\\
 =\int_{(\sP^1\times\sP^1)\setminus\diag_{\bP^1}}
 \Phi_{g\circ h}\rd((h^*\cZ)\times (h^*\cZ))
 = (h^*\cZ,h^*\cZ)_{g\circ h}, 
 \end{multline*}
which completes the proof.
\end{proof}

\subsection{Estimates of regularized Fekete sums}\label{th:reguFekete}
For every $\epsilon>0$ and every effective divisor $\cZ$ on $\bP^1$,
the $\epsilon$-regularization of $\cZ$ is defined by
$\cZ_{\epsilon}:=\sum_{w\in\cZ}(\ord_w\cZ)\cdot[w]_\epsilon$
on $\sP^1$, and 
for every continuous weight $g$ on $\sP^1$,
the {\itshape $\epsilon$-regularized $g$-Fekete sum} with respect to $\cZ$ is
\begin{gather*}
 (\cZ_\epsilon,\cZ_\epsilon)_g
 :=\int_{\sP^1\times\sP^1}\Phi_g\rd(\cZ_\epsilon\times\cZ_\epsilon)
\in\bR.
\end{gather*}

\begin{fact}
 If the continuous weight $g$ is a normalized weight on $\sP^1$, then 
 for every $\epsilon>0$ and every effective divisor $\cZ$ on $\bP^1$, 
 the continuity of $U^\#_{[z]_\epsilon}(\cdot)$ on $\sP^1$ (Lemma \ref{th:chordal})
implies
 the {\itshape negativity}
 \begin{gather}
 (\cZ_\epsilon,\cZ_\epsilon)_g\le V_g=0\label{eq:negative}
 \end{gather}
 (cf.\ \cite[\S 2.5 et \S 4.5]{FR06}), and
 if in addition $\epsilon\in(0,1]$, then we also have
 \begin{multline}
 (\cZ_{\epsilon},\cZ_{\epsilon})_g
 \ge
 (\cZ,\cZ)_g
 +2\sum_{w\in\supp\cZ\setminus\{\infty\}}(\ord_w\cZ)^2\cdot\log[w,\infty]\\
 -2\sum_{w\in\supp\cZ}(\ord_w\cZ)^2g(w)\\
 +(\log\epsilon)\cdot(\cZ\times\cZ)(\diag_{\bP^1})
 -2(\deg\cZ)^2\cdot\hat{\eta}_{g,\supp\cZ}(\epsilon),\label{eq:lower}
 \end{multline}
 where for every non-empty subset $F$ in $\bP^1$,
 recalling the definition \eqref{eq:moduluspre} of the restricted 
 modulus of continuity $\eta_{g,F}:[0,1]\to\bR_{\ge 0}$ 
of $g$ around $F$ with respect to $\sd$, 
 the function $\hat{\eta}_{g,F}:[0,1]\to\bR_{\ge 0}$ is defined by 
\begin{gather}
 [0,1]\ni\epsilon\mapsto\hat{\eta}_{g,F}(\epsilon)
:=\eta_{g,F}(\epsilon)+
\begin{cases}
 0 & \text{for non-archimedean $K$},\\
 \epsilon & \text{for archimedean $K$}
\end{cases}\label{eq:modulus}
\end{gather}
(cf.\ \cite[\S 2.6 et \S 4.7]{FR06}).
\end{fact}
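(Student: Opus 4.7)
\emph{Proof plan.} I plan to expand $(\cZ_\epsilon,\cZ_\epsilon)_g=\iint\log[\cS,\cS']_{\can}\,\rd(\cZ_\epsilon\times\cZ_\epsilon)-2(\deg\cZ)\int g\,\rd\cZ_\epsilon$, treat the two summands separately, and then compare with the corresponding terms in the discrete expansion of $(\cZ,\cZ)_g$. The continuity of the chordal potentials $U^\#_{[w]_\epsilon}$ provided by Lemma~\ref{th:chordal} makes every integral finite; in particular $\nu_\epsilon:=\cZ_\epsilon/\deg\cZ$ is a probability Radon measure with finite $g$-energy, and the variational characterization of $V_g=0$ immediately gives $(\cZ_\epsilon,\cZ_\epsilon)_g=(\deg\cZ)^2\int\Phi_g\,\rd(\nu_\epsilon\times\nu_\epsilon)\le 0$, which is~\eqref{eq:negative}.

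For~\eqref{eq:lower}, I would handle the weight part first. The measure $[w]_\epsilon$ is supported within $\sd$-distance $\le\epsilon$ of $w$ in the non-archimedean case, and within an $O(\epsilon)$-neighbourhood of $w$ in $(\bP^1,\sd)$ in the archimedean case, which yields the bound $|\int g\,\rd[w]_\epsilon-g(w)|\le\hat\eta_{g,\supp\cZ}(\epsilon)$. Writing the off-diagonal weight part of $(\cZ,\cZ)_g$ as $-2(\deg\cZ)\sum_w(\ord_w\cZ)g(w)+2\sum_w(\ord_w\cZ)^2g(w)$ and comparing with $-2(\deg\cZ)\int g\,\rd\cZ_\epsilon$ then produces simultaneously the term $-2\sum_w(\ord_w\cZ)^2g(w)$ and the error $-2(\deg\cZ)^2\hat\eta_{g,\supp\cZ}(\epsilon)$ in~\eqref{eq:lower}.

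For the kernel part, expand $\iint\log[\cS,\cS']_{\can}\,\rd(\cZ_\epsilon\times\cZ_\epsilon)=\sum_{z,w\in\supp\cZ}(\ord_z\cZ)(\ord_w\cZ)I_\epsilon(z,w)$ with $I_\epsilon(z,w):=\iint\log[\cS,\cS']_{\can}\,\rd[z]_\epsilon\,\rd[w]_\epsilon$, and separate off-diagonal from diagonal contributions. For $z\neq w$, the mean-value inequality for the $\Omega_{\can}$-subharmonic function $\cS\mapsto\log[\cS,\cS']_{\can}$ gives $I_\epsilon(z,w)\ge\log[z,w]$, with equality for sufficiently small $\epsilon$ in the non-archimedean case; this recovers the log-part of $(\cZ,\cZ)_g$ with no error. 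For $z=w\in K$, I would compute $I_\epsilon(z,z)=\log\epsilon+2\log[z,\infty]$ explicitly via~\eqref{eq:Gromov} in the non-archimedean case and via the Frostman identity $\int\log|\cS-z|\,\rd m_{\partial\bD(z,\epsilon)}(\cS)=\log\epsilon$ in the archimedean case; the case $z=w=\infty$ follows by conjugation with $\iota\in U_K$ via Lemma~\ref{th:pullbackprod}. Summing the diagonal contributions then yields $(\log\epsilon)(\cZ\times\cZ)(\diag_{\bP^1})+2\sum_{w\in\supp\cZ\setminus\{\infty\}}(\ord_w\cZ)^2\log[w,\infty]$, matching~\eqref{eq:lower}.

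The main obstacle is the self-integral identity $I_\epsilon(z,z)=\log\epsilon+2\log[z,\infty]$. In the non-archimedean case it requires the identification $\diam(\cS_{\can}\wedge\pi_\epsilon(z))=\max\{1,|z|\}=[z,\infty]^{-1}$, valid for $\epsilon\le 1$, combined with $\diam\pi_\epsilon(z)=\epsilon$; in the archimedean case the Frostman identity together with a Taylor expansion of $\log(1+|z+\epsilon e^{i\theta}|^2)$ produces the same expression modulo an $O(\epsilon)$ remainder, which is precisely what the archimedean correction $\hat\eta_{g,F}(\epsilon)=\eta_{g,F}(\epsilon)+\epsilon$ is designed to absorb.
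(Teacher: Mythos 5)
Your overall strategy---regularize pair by pair, split $\Phi_g$ into kernel and weight parts, and compare with the discrete sum---is the same as the paper's, and your argument for \eqref{eq:negative} is fine. But there is a genuine error in the archimedean off-diagonal kernel estimate. You claim that $\cS\mapsto\log[\cS,\cS']_{\can}$ obeys a sub-mean-value inequality giving $I_\epsilon(z,w)\ge\log[z,w]$ for $z\neq w$ ``with no error.'' In fact $\Delta\log[\cdot,\cS']_{\can}=\delta_{\cS'}-\Omega_{\can}$, so away from $\cS'$ this function is \emph{super}harmonic with respect to the Fubini--Study form, and the mean-value inequality goes the wrong way for the $-\log\sqrt{1+|\cdot|^2}$ part. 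Concretely, for $z,w\in K\cong\bC$ with $|z-w|\ge\epsilon$,
\begin{multline*}
I_\epsilon(z,w)=\log|z-w|-\int_0^{2\pi}\log\sqrt{1+|z+\epsilon e^{i\theta}|^2}\,\frac{\rd\theta}{2\pi}\\
-\int_0^{2\pi}\log\sqrt{1+|w+\epsilon e^{i\phi}|^2}\,\frac{\rd\phi}{2\pi}<\log[z,w],
\end{multline*}
strictly, because $\log\sqrt{1+|\cdot|^2}$ is strictly subharmonic and its circle means strictly exceed its central values. The correct route, and the one the paper takes, is to decompose $\log[\cS,\cS']_{\can}=\log|\cS-\cS'|_{\infty}+\log[\cS,\infty]_{\can}+\log[\cS',\infty]_{\can}$ via \eqref{eq:Hsia}: the genuine mean-value inequality \eqref{eq:regularizationaffine} applies to the Hsia kernel with no loss, and the two $\log[\cdot,\infty]_{\can}$ factors are controlled to within $\epsilon$ each by \eqref{eq:infty}, at a cost of $2\epsilon$ per ordered pair (diagonal or not) in the archimedean case.

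This also breaks your error bookkeeping. You charge the entire $-2(\deg\cZ)^2\hat{\eta}_{g,\supp\cZ}(\epsilon)$ to the weight comparison, but that step only requires $-2(\deg\cZ)^2\eta_{g,\supp\cZ}(\epsilon)$; the remaining $-2(\deg\cZ)^2\epsilon_K\epsilon$ is exactly what must absorb the $2\epsilon$-per-pair loss in the kernel part---both the off-diagonal loss above and the $O(\epsilon)$ remainder in your diagonal computation, which you currently propose to absorb with a budget you have already spent. With the allocation $\eta$ to the weights and $\epsilon_K\epsilon$ to the kernels the totals match $\hat{\eta}$ exactly, and the rest of your argument (the diagonal identities, the $\iota$-conjugation for $\infty$, and the non-archimedean case, where all of these errors vanish) is sound.
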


We will include a deduction of the latter \eqref{eq:lower}
in Section \ref{sec:lower}.

\section{Proofs of Theorems \ref{th:Holder} and \ref{th:mahler}}\label{sec:mahler}

Let $K$ be an algebraically closed field that is complete with respect to a
non-trivial absolute value, and
let $g$ be a normalized weight on $\sP^1$.
Recall the definition \eqref{eq:modulus} of $\hat{\eta}_{g,F}$
(and \eqref{eq:moduluspre} of $\eta_{g,F}$).
The following could be regarded as a refinement of
Favre--Rivera-Letelier \cite[Propositions 2.8 et 4.9]{FR06}.

\begin{mainth}\label{th:mahler}
Let $K$ be an algebraically closed field of possibly positive characteristic
that is complete with respect to
a non-trivial and possibly non-archimedean absolute value, and
let $g$ be a normalized weight on $\sP^1$. Then
for every non-empty finite subset $F$ in $\bP^1$ and every $\epsilon\in(0,1]$, 
\begin{gather}
(F,F)_g\le (\#F)\log(\epsilon^{-1})
+2(\#F)^2\cdot\hat{\eta}_{g,F}(\epsilon)+2(\#F)\sup_{\sP^1}|g|.\label{eq:mahler}
\end{gather}
\end{mainth}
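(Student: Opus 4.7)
The plan is to chain the lower estimate \eqref{eq:lower} of the regularized Fekete sum $(\cZ_\epsilon,\cZ_\epsilon)_g$ with the negativity \eqref{eq:negative}, applied to $\cZ=\cZ_F$ (so that $\deg\cZ=\#F$ and $(\cZ\times\cZ)(\diag_{\bP^1})=\#F$), and then to dispose of the one remaining bad term via the $U_K$-invariance supplied by Lemmas \ref{th:pullback} and \ref{th:pullbackprod}. Combining \eqref{eq:lower} with $(\cZ_\epsilon,\cZ_\epsilon)_g\le 0$ immediately gives
\[
 (F,F)_g \;\le\; -2\!\!\!\sum_{w\in F\setminus\{\infty\}}\!\!\!\log[w,\infty] \;+\; 2\sum_{w\in F}g(w) \;+\; (\#F)\log(\epsilon^{-1}) \;+\; 2(\#F)^2\hat{\eta}_{g,F}(\epsilon),
\]
and bounding $\sum_{w\in F}g(w)\le(\#F)\sup_{\sP^1}|g|$ turns three of the four right-hand terms into those of \eqref{eq:mahler}. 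The task reduces to controlling the nonnegative remainder $-2\sum_{w\in F\setminus\{\infty\}}\log[w,\infty]$.

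By Lemmas \ref{th:pullback} and \ref{th:pullbackprod}, running the same chain with $(h^{-1}(F),\,g\circ h)$ in place of $(F,g)$ for any $h\in U_K$ keeps $(F,F)_g$, $\sum g(w)$, and $\hat{\eta}_{g,F}(\epsilon)$ invariant (using that $h$ acts as an isometry of $(\sP^1,\sd)$ preserving $[\cdot,\cdot]_{\can}$), while it replaces the bad sum by $-2\sum_{w\in F\setminus\{h(\infty)\}}\log[w,h(\infty)]$. One is therefore free to choose $h(\infty)\in\bP^1$ optimally.

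In the non-archimedean case, since $K$ is algebraically closed, the residue field $\tilde K$ is infinite, and hence the set $\bP^1(\tilde K)$ of tangent directions at $\cS_{\can}$ in the tree $\sP^1$ is infinite; as $F$ is finite, I can pick $h\in U_K=\PGL(2,\cO_K)$ so that $h(\infty)$ sits in a tangent direction at $\cS_{\can}$ unused by $F$. For such $h$, the median of $w$, $h(\infty)$, $\cS_{\can}$ in the tree $\sP^1$ is $\cS_{\can}$ itself for every $w\in F$, so by \eqref{eq:Gromov} we have $[w,h(\infty)]_{\can}=1$ and $\log[w,h(\infty)]=0$; the bad sum vanishes and \eqref{eq:mahler} follows cleanly. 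The main obstacle will be the archimedean case $K\cong\bC$, where such exact vanishing is unavailable (the identity $\int_{\bP^1}\log[w,z]\,d\omega(z)=-1/2$ forces the residual for any choice of $h\in\PSU(2,\bC)$ to be of order $\#F$ in general); this residual must be absorbed via the interplay of the $(\#F)\log(\epsilon^{-1})$ summand and the extra $2(\#F)^2\epsilon$ summand inside $2(\#F)^2\hat{\eta}_{g,F}(\epsilon)$ (present only in the archimedean case through the $+\epsilon$ in the definition of $\hat{\eta}$), whose sum is bounded below by $\#F(\log(2\#F)+1)\ge\#F$ for every $\epsilon\in(0,1]$. Making this archimedean absorption precise is the technical heart of the argument.
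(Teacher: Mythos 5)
Your opening step---combining \eqref{eq:negative} with \eqref{eq:lower} for $\cZ=\cZ_F$ to get
$(F,F)_g\le -2\sum_{w\in F\setminus\{\infty\}}\log[w,\infty]+2\sum_{w\in F}g(w)+(\#F)\log(\epsilon^{-1})+2(\#F)^2\hat{\eta}_{g,F}(\epsilon)$---is correct, and your treatment of the non-archimedean case is a valid and genuinely different route from the paper's: moving $\infty$ by a suitable $h\in\PGL(2,\cO_K)$ into a tangent direction at $\cS_{\can}$ missed by the reductions of the finitely many points of $F$ forces $[w,h(\infty)]=1$ for every $w\in F$, so the bad term vanishes identically and no induction is needed.

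The archimedean case, however, contains a genuine gap, and it is not merely a matter of ``making the absorption precise.'' The residual $R:=-2\sum_{w\in F\setminus\{h(\infty)\}}\log[w,h(\infty)]$ is genuinely of order $\#F$ for the best possible choice of $h\in\PSU(2,\bC)$: its average over $h(\infty)$ with respect to $\omega$ equals $\#F$ exactly (since $\int_{\bP^1}\log[w,\cdot]\,\rd\omega=-1/2$), and, e.g., for $F$ the $N$-th roots of unity one checks $R\ge N\log 2$ for every choice of $h$. So your chain only yields $(F,F)_g\le R+(\#F)\log(\epsilon^{-1})+2(\#F)^2\hat{\eta}_{g,F}(\epsilon)+2(\#F)\sup_{\sP^1}|g|$, which is strictly weaker than \eqref{eq:mahler}. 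The proposed absorption mechanism is fallacious: the quantity $B:=(\#F)\log(\epsilon^{-1})+2(\#F)^2\epsilon$ appears \emph{identically on both sides} of the target inequality, so the lower bound $B\ge\#F(\log(2\#F)+1)$ creates no room to swallow an additional positive $R$---you would need $R+B\le B$. (Your weaker bound does suffice for \eqref{eq:asymp}, but not for \eqref{eq:mahler} as stated.) The paper's fix is different in kind: arranging $\infty\in F$ by your $U_K$-conjugation, it does not discard the term $+2\sum_{w\in F\setminus\{\infty\}}\log[w,\infty]$ from \eqref{eq:lower} but cancels it exactly against the chordal part of $2\sum_{w\in F\setminus\{\infty\}}\Phi_g(w,\infty)$, which sits inside $(F,F)_g$ itself by formula \eqref{eq:formula}; this gives $2(F,F)_g\le(\#F)\log(\epsilon^{-1})+2(\#F)^2\hat{\eta}_{g,F}(\epsilon)+2(\#F)\sup_{\sP^1}|g|+(F\setminus\{\infty\},F\setminus\{\infty\})_g$, and an induction on $\#F$ then closes the argument uniformly in both the archimedean and non-archimedean cases.
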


\begin{remark}
By \cite[Propositions 2.8 et 4.9]{FR06} mentioned above,
we could assert that 
for every non-empty and finite
subset $F$ {\itshape in} $\bP^1\setminus\{\infty\}$ and every $\epsilon\in(0,1]$,
\begin{multline}
 (F,F)_g\\
\le(\#F)\log(\epsilon^{-1})+2(\#F)^2\hat{\eta}_{g,F}(\epsilon)
 +2(\#F)\sup_{\sP^1}|g|
 -2\sum_{w\in F}\log[w,\infty],\label{eq:finite}
\end{multline}
as a consequence of the originals applicable to
$F\subset\bP^1\setminus\{\infty\}$ of
\eqref{eq:negative} and \eqref{eq:lower}.
We note that the term $-2\sum_{w\in F}\log[w,\infty]$ is strictly positive.
Even to obtain the sharper \eqref{eq:mahler} 
in the case $F\subset\bP^1\setminus\{\infty\}$,
we needed \eqref{eq:negative} and \eqref{eq:lower} applicable
to $F$ possibly containing 
$\infty$, and the formula \eqref{eq:formula}.
\end{remark}

\begin{proof}[Proof of Theorem $\ref{th:mahler}$]
 Fix $\epsilon\in(0,1]$. Let us show \eqref{eq:mahler}
 for every non-empty finite subset $F$ in $\bP^1$
 by an induction on $\#F$. First of all, for every singleton $F$ in $\bP^1$, 
 we have $(F,F)_g=0$, so \eqref{eq:mahler} holds.
 Let $N\in\bN$ be $>1$, and suppose that \eqref{eq:mahler} holds
 for every $F\subset\bP^1$ satisfying $\#F=N-1$.

 Fix $F\subset\bP^1$ satisfying $\#F=N$.
 If $\infty\in F$, then $\#(F\setminus\{\infty\})=N-1$.
 By the upper and lower estimates
 \eqref{eq:negative} and \eqref{eq:lower}
 of $((\cZ_F)_\epsilon,(\cZ_F)_\epsilon)_g$, we have
 \begin{align*}
 &2\cdot\sum_{w\in F\setminus\{\infty\}}\Phi_g(w,\infty)
  =2\cdot\sum_{w\in F\setminus\{\infty\}}(\log[w,\infty]-g(w)-g(\infty))\\
 =&2\cdot\sum_{w\in F\setminus\{\infty\}}\log[w,\infty]
 -2\biggl(\sum_{w\in F\setminus\{\infty\}}g(w)+(N-1)\cdot g(\infty)\biggr)\\
 \le&\biggl(-(F,F)_g+2\sum_{w\in F}g(w)
 -N\cdot\log\epsilon+2N^2\cdot\hat{\eta}_{g,F}(\epsilon)\biggr)
\quad(\text{by } 
\eqref{eq:negative}\text{ and }\eqref{eq:lower})\\
&\quad -2\biggl(\sum_{w\in F\setminus\{\infty\}}g(w)+(N-1)\cdot g(\infty)\biggr)\\
 =& -(F,F)_g+N\cdot\log(\epsilon^{-1})
 +2N^2\cdot\hat{\eta}_{g,F}(\epsilon)-2(N-2)g(\infty),
 \end{align*}
 which with the formula \eqref{eq:formula} on $(F,F)_g$ for $z=\infty$ yields
\begin{multline*}
 2(F,F)_g\le N\cdot\log(\epsilon^{-1})
 +2N^2\cdot\hat{\eta}_{g,F}(\epsilon)+2N\cdot\sup_{\sP^1}|g|\\
+(F\setminus\{\infty\},F\setminus\{\infty\})_g,
\end{multline*}
 and we also note that
 $\eta_{g,F\setminus\{\infty\}}(\epsilon)\le\eta_{g,F}(\epsilon)$.
 Hence the induction assumption applied to
 $(F\setminus\{\infty\},F\setminus\{\infty\})_g$
 completes the proof of \eqref{eq:mahler} for the $F$ in this case.
If $\infty\not\in F$, then
there is $h\in U_K$ satisfying $\infty\in h^{-1}(F)$.
Then $\#(h^{-1}(F))=N$. 
By (Lemma \ref{th:pullback} and) Lemma \ref{th:pullbackprod},
we have $(F,F)_g=(h^{-1}(F),h^{-1}(F))_{g\circ h}$, and
by $\sd(h(\cS),h(\cS'))=\sd(\cS,\cS')$ on $\sP^1\times\sP^1$, we also have
$\eta_{g,F}\equiv\eta_{g\circ h,h^{-1}(F)}$ on $[0,1]$.
Hence \eqref{eq:mahler} applied to $(h^{-1}(F),h^{-1}(F))_{g\circ h}$
completes the proof of \eqref{eq:mahler}
for the $F$ in this case.
%
\end{proof}

\begin{proof}[Proof of Theorem $\ref{th:Holder}$]
Suppose that $g|\bP^1$ is a $1/\kappa$-H\"older continuous function
on $(\bP^1,[z,w])$ for some $\kappa\ge 1$ in that 
$C:=\sup_{z,w\in\bP^1:\,\text{distinct}}|g(z)-g(w)|/[z,w]^{1/\kappa}\in\bR_{\ge 0}$.
Recall the definition of $C'\in\bR_{\ge 0}$ in Theorem \ref{th:Holder}.

\begin{lemma}\label{th:comparable}
On $\bP^1\times\sP^1$, 
$|g(z)-g(\cS)|\le C'\cdot\sd(z,\cS)^{1/\kappa}$.
\end{lemma}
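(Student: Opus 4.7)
The plan is to extend the H\"older bound on $g|\bP^1$ (with respect to $[z,w]$) to a bound on $\bP^1\times\sP^1$ (with respect to $\sd$) via the density of $\bP^1$ in $\sP^1$ together with Lemma \ref{th:comparison}.

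If $K$ is archimedean, there is nothing to do: $\sP^1\cong\bP^1$ and the definition \eqref{eq:small} of $\sd$ yields $\sd(z,w)=[z,w]$ on $\bP^1\times\bP^1$, so the asserted inequality with $C'=C$ is the H\"older hypothesis itself.

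Assume now that $K$ is non-archimedean, and fix $z\in\bP^1$. If $\cS\in\bP^1$, then $[z,\cS]=[z,\cS]_{\can}$ and Lemma \ref{th:comparison} gives $[z,\cS]\le 2\,\sd(z,\cS)$, so
\[
|g(z)-g(\cS)|\le C\cdot[z,\cS]^{1/\kappa}\le C\cdot 2^{1/\kappa}\cdot\sd(z,\cS)^{1/\kappa}=C'\cdot\sd(z,\cS)^{1/\kappa}.
\]
If $\cS\in\sH^1$, I would pick a sequence $(w_n)\subset\bP^1$ with $w_n\to\cS$ in $\sP^1$, using the density of $\bP^1$ in $\sP^1$ recalled in \S\ref{th:berkovich}. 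The continuity of $g$ on $\sP^1$ (it is a continuous weight) yields $g(w_n)\to g(\cS)$, and the separate continuity of the generalized Hsia kernel (Notation \ref{th:notation}) yields $[z,w_n]\to[z,\cS]_{\can}$. Passing to the limit in the H\"older inequality $|g(z)-g(w_n)|\le C\cdot[z,w_n]^{1/\kappa}$ gives $|g(z)-g(\cS)|\le C\cdot[z,\cS]_{\can}^{1/\kappa}$, and a further application of Lemma \ref{th:comparison} supplies the factor $2^{1/\kappa}$ needed to produce the constant $C'$.

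No serious obstacle arises; the only point requiring care is that $\sd$ is defined on all of $\sP^1\times\sP^1$ via \eqref{eq:small}, while the H\"older hypothesis is stated only on $\bP^1\times\bP^1$, so the extension off $\bP^1$ must be carried out using the kernel $[\cdot,\cdot]_{\can}$ and Lemma \ref{th:comparison} rather than directly in terms of $\sd$.
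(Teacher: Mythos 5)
Your proposal is correct and follows essentially the same route as the paper: the archimedean case is immediate, and in the non-archimedean case one first extends the H\"older bound from $\bP^1$ to $\sP^1$ using the continuity of $g$ and the separate continuity of $[z,\cdot]_{\can}$ together with the density of $\bP^1$ in $\sP^1$, and then converts $[z,\cS]_{\can}$ into $\sd(z,\cS)$ via Lemma \ref{th:comparison}, which is exactly where the factor $2^{1/\kappa}$ in $C'$ comes from.
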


\begin{proof}
 If $K$ is archimedean, then there is nothing to show
 since $\sP^1\cong\bP^1$, $\sd(z,w)=[z,w]$, and $C':=C$.
 Suppose that $K$ is non-archimedean. Then $C':=C\cdot 2^{1/\kappa}$, and
 for every $z\in\bP^1$, by the continuity of $g$ and
 $[z,\cdot]_{\can}$ on $\sP^1$ and the density of $\bP^1$ in $\sP^1$,
 we have $|g(z)-g(\cdot)|\le C[z,\cdot]_{\can}^{1/\kappa}$ on $\sP^1$, 
 and in turn
 $|g(z)-g(\cdot)|\le (C\cdot 2^{1/\kappa})\cdot\sd(z,\cdot)^{1/\kappa}$
 on $\sP^1$ by Lemma \ref{th:comparison}.
\end{proof}

 Once Lemma \ref{th:comparable} is at our disposal, for every non-empty
 finite subset $F$ in $\bP^1$, we have 
 $\eta_{g,F}(\epsilon)\le C'\epsilon^{1/\kappa}$ on $[0,1]$, 
 so by setting $\epsilon=(\#F)^{-\kappa}\in(0,1]$
 in \eqref{eq:mahler}, we obtain \eqref{eq:maherHolder}. 
 Now the proof of Theorem \ref{th:Holder} is complete.
\end{proof}

\section{On deduction of \eqref{eq:lower}}\label{sec:lower}

Let $K$ be an algebraically closed field that is complete with respect to a
non-trivial absolute value $|\cdot|$. 
Let $g$ be a continuous weight on $\sP^1$ and, for every
non-empty finite subset $F$ in $\bP^1$, recall the definitions
\eqref{eq:moduluspre} and \eqref{eq:modulus} of the functions
$\eta_{g,F}:[0,1]\to\bR_{\ge 0}$ and $\hat{\eta}_{g,F}:[0,1]\to\bR_{\ge 0}$, 
respectively. 
Let us see that
for every $\epsilon\in(0,1]$ and every $z,w\in\bP^1$,
\begin{multline}
 \int_{\sP^1\times\sP^1}\Phi_g\rd([z]_{\epsilon}\times[w]_{\epsilon})\\
\ge
\begin{cases}
 \Phi_g(z,w)-2\hat{\eta}_{g,\{z,w\}}(\epsilon) & \text{if }z\neq w,\\
 \log\epsilon+2\log[z,\infty]-2g(z)-2\hat{\eta}_{g,\{z\}}(\epsilon)
 & \text{if }z=w\in K,\\
 \log\epsilon-2g(\infty)-2\hat{\eta}_{g,\{\infty\}}(\epsilon)
 & \text{if }z=w=\infty.
 \end{cases}\label{eq:reglower}
\end{multline} 
Once \eqref{eq:reglower} is at our disposal, \eqref{eq:lower}
will follow by a computation similar to that 
in the proof of \cite[Lemma 6.1]{OkuDivisor}.

\begin{remark}
 An estimate similar to \eqref{eq:reglower} was obtained in 
 \cite[Lemma 3.2]{OkuDivisor}, where for archimedean $K$,
 the definition of $[z]_\epsilon$ was slightly different (or, more precisely,
 $[z]_\epsilon$ was defined to be more smooth for archimedean $K$).
\end{remark}

\begin{proof}[Proof of $\eqref{eq:reglower}$]
For every $\epsilon>0$ and every $z,w\in K$, we recall that
 \begin{gather}
 \int_{\sP^1\times\sP^1}\log|\cS-\cS'|_{\infty}\rd([z]_\epsilon\times[w]_\epsilon)(\cS,\cS')
 \ge
 \begin{cases}
  \log|z-w| & \text{if }z\neq w,\\
 \log\epsilon & \text{if }z=w
 \end{cases}\label{eq:regularizationaffine}
 \end{gather}
(see Favre--Rivera-Letelier \cite[Lemme 4.11]{FR06} and 
Fili--Pottmeyer \cite[Lemma 4]{FP15} for non-archimedean
and archimedean $K$, respectively), and
for every $\epsilon\in(0,1]$ and every $z\in K$, we have
 \begin{gather}
 \supp[z]_\epsilon\subset\{\cS\in\sP^1:|\cS-z|_{\infty}\le\epsilon\}
 \subset\{\cS\in\sP^1:\sd(\cS,z)\le\epsilon\}.\label{eq:inclusion}
\end{gather}
By the first inclusion of \eqref{eq:inclusion} and
the density of $\bP^1$ in $\sP^1$, a direct computation shows
that for every $z\in K$ and every $\epsilon\in(0,1]$,
 \begin{multline}
 \sup_{\cS\in\supp[z]_\epsilon}
 \bigl|\log[\cS,\infty]_{\can}-\log[z,\infty]\bigr|
 \le\begin{cases}
    \epsilon & \text{for archimedean }K,\\
    0 & \text{for non-archimedean }K.
   \end{cases}\label{eq:infty}
 \end{multline}

By \eqref{eq:regularizationaffine}, \eqref{eq:inclusion}, and \eqref{eq:infty}
(and the definitions \eqref{eq:kernel} and \eqref{eq:Hsia} of $\Phi_g$
and $|\cS-\cS'|_{\infty}$, respectively), 
we immediately have \eqref{eq:reglower}
for every $z,w\in K$ and every $\epsilon\in(0,1]$, 
and also by $[\iota(\cS),\iota(\cS')]_{\can}=[\cS,\cS']_{\can}$
on $\sP^1\times\sP^1$, for every $\epsilon\in(0,1]$, we have
 \begin{align*}
 &\int_{\sP^1\times\sP^1}\Phi_g\rd([\infty]_\epsilon\times[\infty]_\epsilon)\\
 \ge&\int_{\sP^1\times\sP^1}\log[\cS,\cS']_{\can}\rd([0]_\epsilon\times[0]_\epsilon)(\cS,\cS')-2g(\infty)-2\eta_{g,\{\infty\}}(\epsilon)\\
 \ge&\log\epsilon+2\log[0,\infty]-2g(\infty)-2\hat{\eta}_{g,\{\infty\}}(\epsilon)
 =\log\epsilon-2g(\infty)-2\hat{\eta}_{g,\{\infty\}}(\epsilon).
 \end{align*}

There remains the case where $z=\infty$ and $w\in K$. By 
$\sd(\iota(\cS),\iota(\cS'))=\sd(\cS,\cS')$ on $\sP^1\times\sP^1$
and \eqref{eq:inclusion}, for every $\epsilon\in(0,1]$ and $z\in K$, 
we first have
\begin{multline*}
 \int_{\sP^1\times\sP^1}\Phi_g\rd([\infty]_\epsilon\times[z]_\epsilon)\\
 \ge
 \int_{\sP^1\times\sP^1}
 \log[\cS,\cS']_{\can}\rd([\infty]_\epsilon\times[z]_\epsilon)(\cS,\cS')
 -g(\infty)-g(z)-2\eta_{g,\{\infty,z\}}(\epsilon);
\end{multline*}
moreover, (i) for archimedean $K\cong\bC$, also by $\iota^2=\Id$ on $\sP^1$,
we have
 \begin{align*}
  &\int_{\sP^1\times\sP^1}
  \log[\cS,\cS']_{\can}\rd([\infty]_\epsilon\times[z]_\epsilon)(\cS,\cS')\\
= &\int_{\sP^1\times\sP^1}
 \log[\cS,\cS']_{\can}\rd([0]_\epsilon\times\iota_*[z]_\epsilon)(\cS,\cS')\\
 =&
\int_{\sP^1\times\sP^1}
 \log|\cS-\cS'|_{\infty}\rd([0]_\epsilon\times\iota_*[z]_\epsilon)(\cS,\cS')\\
 &\quad+\int_{\sP^1}\log[\cS,\infty]_{\can}\rd[0]_\epsilon(\cS)
 +\int_{\sP^1}\log[\cS',0]_{\can}\rd[z]_\epsilon(\cS')\\
 \ge&\int_{\sP^1}\log[\cS,\infty]_{\can}\rd[0]_\epsilon(\cS)
 +\int_{\sP^1}\log[\cS',\infty]_{\can}\rd[z]_\epsilon(\cS')\\
 \ge&\log[0,\infty]+\log[z,\infty]-2\epsilon
 =\log[z,\infty]-2\epsilon,
\end{align*}
the former inequality in which holds by
\begin{align*}
& \int_{\sP^1\times\sP^1}
 \log|\cS-\cS'|_{\infty}\rd([0]_\epsilon\times\iota_*[z]_\epsilon)(\cS,\cS')\\
 =&\int_0^{2\pi}\frac{\rd\phi}{2\pi}\int_0^{2\pi}
 \log\left|\epsilon e^{i\theta}-\frac{1}{z+\epsilon e^{i\phi}}\right|\frac{\rd\theta}{2\pi}
 =\int_0^{2\pi}\max\left\{\log\left|\frac{1}{z+\epsilon e^{i\phi}}\right|,\log\epsilon\right\}\frac{\rd\phi}{2\pi}\\
 \ge&-\int_0^{2\pi}\log|(z+\epsilon e^{i\phi})-0|\frac{\rd\phi}{2\pi}
 =-\int_{\sP^1}\log|\cS'-0|_\infty\rd[z]_\epsilon(\cS')\\
=&-\int_{\sP^1}\log[\cS',0]_{\can}\rd[z]_\epsilon(\cS')
+\int_{\sP^1}\log[\cS',\infty]_{\can}\rd[z]_\epsilon(\cS'),
\end{align*}
and (ii) for non-archimedean $K$, 
also by the definition \eqref{eq:Gromov} of $[\cS,\cS']_{\can}$,
we have
\begin{align*}
&\int_{\sP^1\times\sP^1}
\log[\cS,\cS']_{\can}\rd([\infty]_\epsilon\times[z]_\epsilon)(\cS,\cS')\\
=&\log[\iota(\pi_{\epsilon}(0)),\pi_\epsilon(z)]_{\can}
\biggl(=
\begin{cases}
 \log\diam\cS_{\can}=1 & \text{if }|z|\le 1,\\
 \log[\pi_\epsilon(0),\iota(\pi_\epsilon(z))]_{\can}\ge\log[0,\iota(z)]
& \text{otherwise}
\end{cases}\biggr)\\
\ge&\log[\infty,z].
\end{align*}

Hence we have
$\int_{\sP^1\times\sP^1}\Phi_g\rd([\infty]_\epsilon\times[z]_\epsilon)
\ge\Phi_g(\infty,z)-2\hat{\eta}_{g,\{\infty,z\}}(\epsilon)$
for every $\epsilon\in(0,1]$ and every $z\in K$, and
the proof of \eqref{eq:reglower} is complete.
\end{proof}

\section{On the optimality of \eqref{eq:asymp}}\label{eq:Lipschitz}
Let $K$ be an algebraically closed field that is complete with respect to a
non-trivial absolute value $|\cdot|$. We include a few examples of 
normalized weights $g$ on $\sP^1$, for which \eqref{eq:asymp} is optimal.

For archimedean $K$, the function
$z\mapsto\log\max\{1,|z|\}+\log[z,\infty]$
on $K\cong\bC$ extends to
a normalized weight $g_0$ on $\sP^1\cong\bP^1$, which is
Lipschitz, i.e.,
$1/\kappa$-H\"older continuous on $(\bP^1,[z,w])$ for $\kappa=1$,
by the piecewise smoothness of $g_0$ on $\bP^1$. 
Moreover, for every $N\in\bN$,
$g_0\equiv\log[\cdot,\infty]$ on $F_N:=\{e^{2i\pi k/N}:k\in\{0,1,\ldots,N-1\}\}$
and
\begin{multline*}
\frac{\sum_{z\in F_N}\sum_{w\in F_N\setminus\{z\}}\Phi_{g_0}(z,w)}{(\#F_N)\log(\#F_N)}\\
=
 \frac{\sum_{k=0}^{N-1}\sum_{j\in\{0,1,\ldots,N-1\}\setminus\{k\}}\log|e^{2i\pi k/N}
 -e^{2i\pi j/N}|}{N\log N}\\
 =\frac{\sum_{k=0}^{N-1}\log\left|(z^N-1)'|_{z=e^{2i\pi k/N}}\right|}{N\log N}
 =\frac{N\log|N|}{N\log N}=1,
\end{multline*}
which implies that \eqref{eq:asymp} is optimal for $g_0$.

For non-archimedean $K$, we can fix $d\in\bN$ such that
$d>1$ and that $|d|=1$
(note that if $k\in\bN$ satisfies $|k|<1$, 
then $|k+1|=\max\{|k|,|1|\}=1$).
Fix $\lambda\in K$ such that $|\lambda|>d^{d/(d-1)}(>1)$ 
and set $f_\lambda(z):=z^d+\lambda\in K[z]$. Then we have
$g_{f_\lambda}(\cS)
=-\lim_{n\to\infty}(\log[f_\lambda^n(\cS),\infty]_{\can})/d^n+\log[\cS,\infty]_{\can}$ on $\sP^1$. 

For every distinct $z,w\in K$, 
$(f_\lambda(z)-f_\lambda(w))/(z-w)=\sum_{j=0}^{d-1}z^jw^{d-1-j}$, so that
by the strong triangle inequality, we have
\begin{gather}
\frac{|f_\lambda(z)-f_\lambda(w)|}{|z-w|}\le\max\{|z|,|w|\}^{d-1}.\label{eq:average}
\end{gather}
%
%
By $|\lambda|>1$ (and the strong triangle inequality), 
we also have
$f_\lambda(\{z\in K:|z|\le|\lambda|^{1/d}\})\subset\{z\in K:|z|\le|\lambda|\}$,
$f_\lambda(\{z\in K:|z|<|\lambda|^{1/d}\})\subset\{z\in K:|z|=|\lambda|\}
\subset\{z\in K:|z|>|\lambda|^{1/d}\}$, and 
on $\{z\in K:|z|>|\lambda|^{1/d}\}$,
$f_\lambda(z)=z^d$
(so $|f_\lambda(z)|=|z|^d>|z|>|\lambda|^{1/d}>1$).
In particular, 
\begin{gather}
 \sup_{z\in K,n\in\bN}\frac{\max\{1,|z|\}}{\max\{1,|f_\lambda^n(z)|\}}
 \le|\lambda|^{1/d},\label{eq:image}
\end{gather}
and 
for every $n\in\bN$, setting
$P_n:=\{z\in K:f_\lambda^n(z)=z\}$,  
we have $P_n\subset\{z\in K:|z|=|\lambda|^{1/d}\}$,
$\#P_n=d^n$, and $g_{f_\lambda}\equiv\log[\cdot,\infty]$ on $P_n$, 
and also by the chain rule and $|d|=1$, 
$|(f_\lambda^n)'|\equiv(|d||\lambda|^{(d-1)/d})^n=(|\lambda|^{(d-1)/d})^n(>1)$
on $P_n$. 

Hence, for every $n\in\bN$, using also the strong triangle inequality, we have
\begin{multline*}
 \frac{\sum_{z\in P_n}\sum_{w\in P_n\setminus\{z\}}\Phi_{g_{f_\lambda}}(z,w)}{(\#P_n)\log(\#P_n)}
 =\frac{\sum_{w\in P_n}\log\left|(f_\lambda^n(z)-z)'|_{z=w}\right|}{d^n\cdot\log(d^n)}\\
 = \frac{\log((|\lambda|^{(d-1)/d})^n)}{n\log d}
=\frac{\log(|\lambda|^{(d-1)/d})}{\log d}.
\end{multline*}
On the other hand, for every $n\in\bN$ and every distinct $z,w\in K$, 
setting 
\begin{gather*}
 t:=\min\bigl\{j\in\{0,1,\ldots,n-1\}:
 \max\{|f^j(z)|,|f^j(w)|\}>|\lambda|^{1/d}\bigr\}
\end{gather*}
(under the convention that $\min\emptyset=-\infty$) 
and $L_t:=\max\{|f^t(z)|,|f^t(w)|\}$ unless $t=-\infty$, 
recalling the definition of
$[z,w]$ in Notation \ref{th:notation} and using \eqref{eq:average} and
\eqref{eq:image}, we have 
\begin{align*}
&\frac{[f_\lambda^n(z),f_\lambda^n(w)]}{[z,w]}\\
\le&
\biggl(\prod_{j=0}^{n-1}\max\{|f_\lambda^j(z)|,|f_\lambda^j(w)|\}\biggr)^{d-1}
\cdot\frac{\max\{1,|z|\}}
{\max\{1,|f_\lambda^n(z)|\}}
\cdot\frac{\max\{1,|w|\}}{\max\{1,|f_\lambda^n(w)|\}}\\
\le&
\begin{cases}
((|\lambda|^{1/d})^n)^{(d-1)}\cdot(|\lambda|^{1/d})^2
& \text{if }t=-\infty,\\
\frac{\bigl(\max\{|z|,|w|\}^{(d^n-1)/(d-1)}\bigr)^{(d-1)}\cdot\max\{|z|,|w|\}}{\max\{|z|,|w|\}^{d^n}}\cdot|\lambda|^{1/d}
& \text{if }t=0,\\
((|\lambda|^{1/d})^{t-1})^{d-1}
\cdot\frac{\bigl(L_t^{(d^{n-t}-1)/(d-1)}\bigr)^{(d-1)}\cdot L_t}{L_t^{d^{n-t}}}\cdot|\lambda|^{1/d}
& \text{if }t\in\{1,\ldots,n-1\}
\end{cases}\\
\le& ((|\lambda|^{1/d})^n)^{(d-1)}\cdot(|\lambda|^{1/d})^2,
\end{align*}
so that for every $n\in\bN$, using also $|\lambda|>d^{d/(d-1)}$,
we have
 \begin{gather*}
 \sup_{z,w\in\bP^1:\,\text{distinct}}\frac{[f_\lambda^n(z),f_\lambda^n(w)]}{[z,w]}\le
 ((|\lambda|^{1/d})^n)^{d-1}\cdot(|\lambda|^{1/d})^2>d^n.
 \end{gather*}
In particular, by Fact $\ref{th:dynamics}$,
$g_{f_\lambda}|\bP^1$ is $1/\kappa$-H\"older continuous on 
$(\bP^1,[z,w])$ for every $\kappa>(\log(|\lambda|^{(d-1)/d}))/\log d$.
Hence \eqref{eq:asymp} is optimal for $g_{f_\lambda}$.


\begin{acknowledgement}
The author thanks the referee for a very careful scrutiny and
comments, and Professor Matt Baker for invaluable discussions.
This research was partially supported by JSPS Grant-in-Aid 
for Scientific Research (C), 15K04924.
\end{acknowledgement}

\def\cprime{$'$}

\end{document}